
\documentclass[11pt,reqno]{amsart}
\usepackage{latexsym,amssymb,amsmath,multicol,rotating,
lscape}
 \textheight=22.5cm
\textwidth=16.7cm
\hoffset=-1.5cm
\topmargin=-0.5cm
 2

\newtheorem{thm}{Theorem}[section]
\newtheorem{lem}[thm]{Lemma}
\newtheorem{prop}[thm]{Proposition}

\newcommand{\thmref}[1]{Theorem~\ref{#1}}

\newcommand{\propref}[1]{Proposition~\ref{#1}}

\theoremstyle{remark}
\newtheorem{rmk}{Remark}[section]

\begin{document}

\title[quaternary and octonary quadratic forms]
{Representations of an integer by some quaternary and octonary quadratic forms} 

\author{B. Ramakrishnan, Brundaban Sahu and Anup Kumar Singh}
\address[B. Ramakrishnan and Anup Kumar Singh]{Harish-Chandra Research Institute, HBNI, 
       Chhatnag Road, Jhunsi,
     Allahabad -     211 019,
   India.}
\address[Brundaban Sahu]
{School of Mathematical Sciences, National Institute of Science  
Education and Research, Bhubaneswar, HBNI, 
Via - Jatni, Khurda, Odisha - 752 050,
India.}

\email[B. Ramakrishnan]{ramki@hri.res.in}
\email[Brundaban Sahu]{brundaban.sahu@niser.ac.in}
\email[Anup Kumar Singh]{anupsingh@hri.res.in}

\subjclass[2010]{Primary  11E25, 11F11; Secondary 11E20}
\dedicatory{Dedicated to Professor V. Kumar Murty on the occasion of his 60th birthday}
\keywords{quaternary quadratic forms; octonary quadratic forms; modular forms of one variable, classical theta function, convolution sums of the divisor functions}
\date{\today}

\begin{abstract}
In this paper we consider certain quaternary quadratic forms and octonary quadratic forms and by using the theory of modular forms, 
we find formulae for the number of representations of a positive integer by these quadratic forms. 
\end{abstract}

\maketitle

\section{Introduction}
In this paper we consider two types of quadratic forms, viz., quaternary and octonary forms. In the first part, we deal with quaternary quadratic forms 
of the following type given by 
\linebreak 
${\mathcal Q}_{a,\ell} = {\mathcal Q}_a \oplus \ell {\mathcal Q}_a : x_1^2 + x_1x_2 + a x_2^2 + \ell (x_3^2 + x_3x_4 + a x_4^2)$,
where ${\mathcal Q}_a$ is the quadratic form $x_1^2 + x_1x_2 + a x_2^2$.  Let $R_{a,\ell}(n)$ denote the number of ways of representing a positive integer $n$ by the quadratic form ${\mathcal Q}_{a,\ell}$. i.e., 
$$
R_{a,\ell}(n) := {\rm card}\left\{(x_1, x_2, x_3, x_4) \in {\mathbb Z}^{4} :n = x_1^2+x_1x_2+a x_2^2+ \ell (x_3^2+x_3x_4+ a x_4^2)\right\}.
$$
One of the main results of this paper is to find formulas for $R_{a,\ell}(n)$,  $(a,\ell) \in \textbf{A}$, where $\textbf{A} = \{(1,2), (1,3), (1,4), (1,5), (2,2), (2,3), (2,4), (3,2), (3,3), (4,2), (5,1), (5,2)\}$.  Let us mention a brief account of similar results obtained so far. 
S. Ramanujan was the first to observe the following identity:
\begin{equation}\label{rama1}
\left(\sum_{m=-\infty}^{\infty}\sum_{n=-\infty}^{\infty} q^{m^2+mn+n^2}\right)^2 = - \frac{1}{2}E_2(z) + \frac{3}{2}E_2(3z).
\end{equation}
(See \cite[pp. 402--403]{Andrews-Berndt}, \cite[p.460, Entry 3.1]{Berndt} for details.) Since 
$$
\left(\sum_{m=-\infty}^{\infty}\sum_{n=-\infty}^{\infty} q^{m^2+mn+n^2}\right)^2 = 1 + \sum_{n=1}^\infty R_{1,1}(n) q^n,
$$
by comparing the $n$-th Fourier coefficients in \eqref{rama1}, one gets 
\begin{equation}\label{rama2}
R_{1,1}(n) = 12 \sigma(n) - 36 \sigma(n/3).
\end{equation}
In the above, $E_2(z)$ denotes the Eisenstein series of weight $2$ on $SL_2({\mathbb Z})$ which is  given by 
\begin{equation}\label{e2}
E_2(z) = 1 - 24\sum_{n\ge 1} \sigma(n) q^n.
\end{equation}
Note that $E_2(z)$ is a quasimodular form. 
Here $q=e^{2 \pi iz}$, $z\in \mathbb{H}$, where $\mathbb{H}=\left\{z\in \mathbb{C}: Im(z) > 0\right.\}$. 
We remark that Ramanujan gave the identity \eqref{rama1} without proof. For a proof of \eqref{rama2} one can refer to Huard et al. 
[\cite{huard}, Theorem 13] or Lomadze[10]. Formulas for $R_{a,1}(n)$ for $a = 2,3,4,6,7$ are known due to the works of 
several authors using different methods. In this paper, we consider the case $a =5$. Further, we also consider the case $\ell >1$ for a few values 
of $\ell$. More precisely, for the pairs $(a,\ell)$ belonging to the set $\textbf{A}$. In the following table, we list the present work and also the earlier works done in this direction. 

\bigskip

\begin{center}
\begin{tabular}{|c|c|c|c|}
\hline
Present work & Earlier works &Author(s) & References\\
$(a,\ell)$ &$(a,1)$ &(earlier works)&\\ 
\hline \hline 
(1,2), (1,3), (1,4), (1,5)&(1,1) &  Huard et al., Lomadze  &  \cite{{huard}, {lomadze}} \\
\hline
(2,2), (2,3), (2,4)&(2,1) &  Ramanujan, Chan-Ong, Williams &  \cite{{Andrews-Berndt}, {Chan-ong}, {Williams}} \\
\hline
(3,2), (3,3)&(3,1) &  Chan-Cooper &  \cite{Chan-Cooper} \\
\hline
(4,2)&(4,1) &  Cooper-Ye  &  \cite{Cooper-Ye}  \\
\hline
(5,1), (5,2) & -- &-- &--\\
\hline
-- &   (6,1) &  Chan-Cooper & \cite{Chan-Cooper} \\
\hline
--&(7,1) & Dongxi Ye  &  \cite{Ye}  \\
\hline 
\end{tabular}

\smallskip
Table 1.
\end{center}

\smallskip

Some of the formulas for $R_{a,\ell}(n)$ involve only the divisor function $\sigma(n)$, namely the cases $(a,\ell) = (1,2), (1,4), (3,2)$. In these cases 
it is possible to get formulas for the number of representations of the quadratic forms in eight variables defined by ${\mathcal Q}_{a,\ell;j} :=$ 
${\mathcal Q}_{a,\ell} \oplus j {\mathcal Q}_{a,\ell}$ using the convolution sums method. We note that this method (doubling the quadratic form with 
coefficients) can be considered in general, here for simplicity we have considered only the following 7 cases: $(a,\ell,j) = 
(1,2,1), (1,2,2), (1,2,3), (1,2,4), (1,4,1), (1,4,2), (3,2,1)$. To be precise, in these cases mentioned above, the formulas do not involve too 
many coefficients coming from the cusp forms. 

\bigskip

In the second part of this article, we consider the following octonary quadratic forms (with coefficients $1,2,4,8$):
\begin{equation}\label{octo}
\sum_{r=1}^i x_r^2 + 2 \sum_{r=i+1}^{i+j} x_r^2 + 4 \sum_{r=i+j+1}^{i+j+k} x_r^2 + 8 \sum_{r=i+j+k+1}^{i+j+k+l} x_r^2, 
\end{equation}
for all partitions $i+j+k+l =8$, $i,j,k,l\ge 0$. There are a total of 165 such quadratic forms, and out of which 81 quadratic forms (corresponding to $ i=0 $ or $ l=0 $) have already been considered  by several authors \cite{{a-a-w}, {a-a-w1}, {octonary-ijnt}}. In the second part, we consider the remaining 84 quadratic forms  and give formulas for the corresponding representation numbers. All these 84 quadratic forms are listed as quadruples $(i,j,k,l)$ 
(corresponding to $i\not= 0$ and  $l\not= 0$) in Table 2 below. 

\smallskip

\begin{center}
{\small 
\begin{tabular}{|c|c|}
\hline
$(i,j,k,l)$ & Type\\
\hline 
(1,0,1,6), (1,0,3,4), (1,0,5,2), (1,1,1,5), (1,1,3,3), (1,1,5,1), (1,2,1,4), (1,2,3,2), (1,3,1,3), (1,3,3,1),&\\
(1,4,1,2), (1,5,1,1), (2,0,0,6), (2,0,2,4), (2,0,4,2), (2,1,0,5),
(2,1,2,3), (2,1,4,1), (2,2,0,4), (2,2,2,2),& I\\
(2,3,0,3), (2,3,2,1), (2,4,0,2), (2,5,0,1), (3,0,1,4), (3,0,3,2), (3,1,1,3), (3,1,3,3), (3,2,1,2), (3,3,1,1),&\\
(4,0,0,4), (4,0,2,2), (4,1,0,3), (4,1,2,1), (4,2,0,2), (4,3,0,1),
(5,0,1,2), (5,1,1,1), (6,0,0,2), (6,1,0,1)&\\
&\\
\hline
&\\
(1,0,0,7), (1,0,2,5), (1,0,4,3), (1,0,6,1), (1,1,0,6), (1,1,2,4), (1,1,4,2), (1,2,0,5), (1,2,2,3), (1,2,4,1),&\\
(1,3,0,4), (1,3,2,2), (1,4,0,3), (1,4,2,1), (1,5,0,2), (1,6,0,1),
(2,0,1,5), (2,0,3,3), (2,0,5,1), (2,1,1,4),& II\\
(2,1,3,2), (2,2,1,3), (2,2,3,1), (2,3,1,2), (2,4,1,1), (3,0,0,5), (3,0,2,3), (3,0,4,1), (3,1,0,4), (3,1,2,2),&\\
(3,2,0,3), (3,2,2,1), (3,3,0,2), (3,4,0,1), (4,0,1,3), (4,0,3,1), (4,1,1,2), (4,2,1,1), (5,0,0,3), (5,0,2,1),&\\
(5,1,0,2), (5,2,0,1), (6,0,1,1), (7,0,0,1)&\\
\hline 
\end{tabular}
}

\smallskip
Table 2.
\end{center}

There are several methods used in the literature to obtain results of this type.  In this paper, we  use the theory of modular forms to prove our 
formulas. We first obtain the level and character of the modular forms corresponding to these quadratic forms. Then by using 
explicit bases for the spaces of modular forms, we deduce our formulas. 

\section{Preliminaries and statement of results}

As we use the theory of modular forms, we shall first present some preliminary facts on modular forms. For $k\in \frac{1}{2}{\mathbb Z}$,
let $M_k(\Gamma_0(N),\chi)$ denote the space of modular forms of weight $k$ for the congruence subgroup $\Gamma_0(N)$ with character $\chi$ and $S_k(\Gamma_0(N), \chi)$ be the subspace of cusp forms of weight $k$ for $\Gamma_0(N)$ with character $\chi$. We assume $4\vert N$ when $k$ is not an integer and in that case, the character $\chi$ which is a Dirichlet character modulo $N$, is an even character. When $\chi$ is the trivial (principal) character modulo $N$, we shall denote the spaces by $M_k(\Gamma_0(N))$ and $S_k(\Gamma_0(N))$ respectively. Further, when $k\ge 4$ is an integer and $N=1$, we shall denote the vector spaces by $M_k$ and $S_k$ respectively. 

For an integer $k \ge 4,$ let $E_k$ denote the normalized Eisenstein series of weight $k$ in $M_k$ given by 
$$
E_k(z) = 1 - \frac{2k}{B_k}\sum_{n\ge 1} \sigma_{k-1}(n) q^n,
$$
where $q=e^{2 i\pi z}$, $\sigma_r(n)$ is the sum of the $r$th powers of the positive divisors of $n$, and $B_k$ is the $k$-th Bernoulli number defined by $\displaystyle{\frac{x}{e^x-1} = \sum_{m=0}^\infty \frac{B_m}{m!} x^m}$.

The classical theta function which is fundamental to the theory of modular forms of half-integral weight is defined by 
\begin{equation}\label{theta}
\Theta(z) = \sum_{n\in {\mathbb Z}} q^{n^2},
\end{equation}
and is a modular form in the space $M_{1/2}(\Gamma_0(4))$. Another function which is mainly used in our work is the Dedekind eta function 
$\eta(z)$, which is  defined by 
\begin{equation}\label{eta}
\eta(z)=q^{1/24} \prod_{n\ge1}(1-q^n).
\end{equation}
An eta-quotient is a finite product of integer powers of $\eta(z)$ and we denote it as follows. 
\begin{equation}\label{eta-q} 
\prod_{i=1}^s \eta^{r_i}(d_i z) := d_1^{r_1} d_2^{r_2} \cdots d_s^{r_s},
\end{equation}
where $d_i$'s are positive integers and $r_i$'s are non-zero integers.

Suppose that $\chi$ and $\psi$ are primitive Dirichlet characters with conductors $M$ and $N$, respectively. For a positive integer $k$, let 
\begin{equation}\label{eisenstein}
E_{k,\chi,\psi}(z) :=  c_0 + \sum_{n\ge 1}\left(\sum_{d\vert n} \psi(d) \cdot \chi(n/d) d^{k-1}\right) q^n,
\end{equation}
where 
$$
c_0 = \begin{cases}
0 &{\rm ~if~} M>1,\\
- \frac{B_{k,\psi}}{2k} & {\rm ~if~} M=1,
\end{cases}
$$
and $B_{k,\psi}$ denotes generalized Bernoulli number with respect to the character $\psi$. 
Then, the Eisenstein series $E_{k,\chi,\psi}(z)$ belongs to the space $M_k(\Gamma_0(MN), \chi/\psi)$, provided $\chi(-1)\psi(-1) = (-1)^k$ 
and $MN\not=1$. When $\chi=\psi =1$ (i.e., when $M=N=1$) and $k\ge 4$, we have $E_{k,\chi,\psi}(z) = E_k(z)$, the normalized Eisenstein series of integer weight $k$ as defined before. We refer to \cite{{miyake}, {Stein}} for details. 
We give a notation to the inner sum in \eqref{eisenstein}:
\begin{equation}\label{divisor}
\sigma_{k-1;\chi,\psi}(n) := \sum_{d\vert n} \psi(d) \cdot \chi(n/d) d^{k-1}.
\end{equation}

Let ${\mathbb N}$ and ${\mathbb N}_0$ 
denote the set of positive integers and non-negative integers respectively. 
For $a_1, \ldots, a_8 \in {\mathbb N}$ and $n\in {\mathbb N}_0$, we define 
$$
N(a_1, \ldots, a_8;n) := {\rm card}\left\{(x_1,\ldots, x_8)\in {\mathbb Z}^8 \vert n = a_1 x_1^2 + \cdots + a_8 x_8^2\right\}.
$$
Note that $N(a_1, \ldots, a_8;0) =1$. Without loss of generality we may assume that 
$$
a_1\le a_2\le \cdots \le a_8 {\rm ~and~} \gcd(a_1, \ldots, a_8) =1.
$$
In our work, we assume that $a_1, \ldots, a_8 \in \{1,2,4,8\}$. For the octonary quadratic forms given by \eqref{octo}, the number of representations 
is denoted (in the above notation) by $N(1^i, 2^j, 4^k, 8^l;n)$, $i+j+k+l=8$. In our earlier paper \cite{octonary-ijnt}, we had listed some of the 
basic results in the theory of modular forms of integral and half-integral weight, which will be used in our proof. For more details we refer to 
\cite{{koblitz}, {miyake}, {shimura}}. 

We now list the main results of this paper. 

\begin{thm}\label{thm1}
For $n\in {\mathbb N}$, we have 
\begin{eqnarray}
R_{1,2}(n) &= & 6\sigma(n) - 12 \sigma(n/2)+18\sigma(n/3)-36\sigma(n/6)\\
R_{1,3}(n) &= & 3\sigma(n) - 27 \sigma(n/9)+3 k(n) \\
R_{1,4}(n) &= & 6\sigma(n)- 18 \sigma(n/2) - 18 \sigma(n/3) + 24 \sigma(n/4) + 54 \sigma(n/6) - 72 \sigma(n/12) \\
R_{1,5}(n) &= & \frac{3}{2}\sigma(n) + \frac{9}{2}\sigma(n/3) - \frac{15}{2} \sigma(n/5) - \frac{45}{2} \sigma(n/15) + \frac{9}{2}
\tau_{2,15}(n)\\
R_{2,2}(n) &= & \frac{4}{3}\sigma(n) + \frac{8}{3}\sigma(n/2) - \frac{28}{3}\sigma(n/7) - \frac{56}{3}\sigma(n/14) + \frac{2}{3}
\tau_{2,14}(n)\\
R_{2,3}(n) &= & \frac{63}{40}\sigma(n) - \frac{9}{2}\sigma(n/3) + \frac{21}{2}\sigma(n/7) - \frac{1323}{40}\sigma(n/21) + \frac{1}{2} \tau_{2,21}(n)\\
R_{2,4}(n) &= & \frac{2}{3}\sigma(n) + \frac{2}{3}\sigma(n/2) + \frac{8}{3}\sigma(n/4) - \frac{14}{3}\sigma(n/7) - \frac{14}{3}\sigma(n/14) 
- \frac{56}{3}\sigma(n/28) \nonumber \\
& & + \frac{4}{3}\tau_{2,14}(n) + \frac{8}{3}\tau_{2,14}(n/2)\\
R_{3,2}(n) &= & 2 \sigma(n) - 4\sigma(n/2) + 22\sigma(n/11) - 44 \sigma(n/22)\\
R_{3,3}(n) &= &\frac{3}{5}\sigma(n) + \frac{9}{5}\sigma(n/3) - \frac{33}{5}\sigma(n/11) - \frac{99}{5}\sigma(n/33) + \frac{16}{15}\tau_{2,11}(n) \nonumber \\ 
& &+\frac{16}{5}\tau_{2,11}(n/3) + \frac{1}{3}\tau_{2,33}(n)\\
R_{4,2}(n) &= &\frac{1}{2}\sigma(n) + \sigma(n/2) + \frac{3}{2}\sigma(n/3) - \frac{5}{2}\sigma(n/5) + 3\sigma(n/6) - 5\sigma(n/10) 
- \frac{15}{2}\sigma(n/15)  \nonumber\\
& & - 15\sigma(n/30) +\frac{1}{2}\tau_{2,15}(n) + \tau_{2,15}(n/2) + \tau_{2,30}(n) \\
R_{5,1}(n) &= & \frac{4}{3}\sigma(n)-\frac{76}{3}\sigma(n/19)+\frac{8}{3}\tau_{2,19}(n) \\
R_{5,2}(n) &= & \frac{6}{5}\sigma(n)-\frac{12}{5}\sigma(n/2)+\frac{114}{5}\sigma(n/19)-\frac{228}{5}\sigma(n/38)+\frac{4}{5}\tau_{2,38;2}(n).
\end{eqnarray}
\end{thm}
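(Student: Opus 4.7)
\textbf{Proof plan for Theorem \ref{thm1}.} The plan is uniform across the twelve pairs $(a,\ell) \in \textbf{A}$: realize the generating function of $R_{a,\ell}(n)$ as a weight $2$ modular form on $\Gamma_0(N)$, decompose this space into its Eisenstein and cuspidal components with respect to an explicit basis, then match finitely many Fourier coefficients to read off the claimed identity.

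First, I would observe that the associated theta series
\[
\Theta_{a,\ell}(z) \;:=\; \sum_{(x_1,x_2,x_3,x_4)\in {\mathbb Z}^4} q^{{\mathcal Q}_{a,\ell}(x_1,x_2,x_3,x_4)} \;=\; 1 + \sum_{n\ge 1} R_{a,\ell}(n)\,q^n
\]
is a modular form of weight $2$ on $\Gamma_0(N)$ with trivial character, where $N = \ell(4a-1)$ is read off from the discriminant $(4a-1)^2\ell^2$ of the Gram matrix of ${\mathcal Q}_{a,\ell}$. Concretely, the relevant levels are $N = 6,9,12,15$ for $a=1$ and $\ell=2,3,4,5$; $N = 14,21,28$ for $a=2$ and $\ell=2,3,4$; $N=22,33$ for $a=3$ and $\ell=2,3$; $N=30$ for $(a,\ell)=(4,2)$; and $N=19,38$ for $a=5$ and $\ell=1,2$. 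The character is trivial because the discriminant is a perfect square.

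Next, I would use the decomposition $M_2(\Gamma_0(N)) = {\mathcal E}_2(\Gamma_0(N)) \oplus S_2(\Gamma_0(N))$. The Eisenstein part admits the standard basis $\{E_2(z) - d\,E_2(dz) : d\mid N,\ d>1\}$, whose Fourier coefficients are linear combinations of $\sigma(n/d)$ for $d\mid N$, matching the shape of every divisor-sum term in the theorem. For the cuspidal contribution, I would construct an explicit basis of $S_2(\Gamma_0(N))$ built from eta-quotients (using the notation \eqref{eta-q}) together with the newforms at level $N$ and their oldform lifts $f(z), f(2z), f(3z), \dots$ from lower levels. The functions $\tau_{2,N}(n)$, $\tau_{2,N}(n/d)$, $\tau_{2,N;2}(n)$ and $k(n)$ in the statement are then the Fourier coefficients of these cusp form basis elements; in particular, the spaces $S_2(\Gamma_0(6))$, $S_2(\Gamma_0(12))$, $S_2(\Gamma_0(19))$ of small dimension will only use one or two newforms.

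With the basis fixed, the proof concludes by expanding $\Theta_{a,\ell}(z)$ and the basis elements as $q$-series and solving a finite linear system in the unknown coefficients. By the Sturm bound, agreement up to $n \le \lfloor 2\,[SL_2({\mathbb Z}):\Gamma_0(N)]/12 \rfloor$ suffices to force equality of the $q$-series, giving the stated formula. The main obstacle will be the explicit construction and verification of the cusp form bases for the largest levels (in particular $N=21, 28, 30, 33, 38$), where several eta-quotients at different divisors of $N$ must be combined with old- and newforms; the level-matching and linear-independence verifications for these bases are where the bookkeeping is heaviest, while the modular-form realization and coefficient comparison are routine.
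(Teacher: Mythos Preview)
Your proposal is correct and follows essentially the same route as the paper: identify $\Theta_{a,\ell}(z)$ as a weight~$2$ modular form on $\Gamma_0(\ell(4a-1))$ with trivial character, write it against an explicit basis consisting of Eisenstein combinations $E_2(z)-dE_2(dz)$ together with newforms (and their oldform lifts), and compare finitely many Fourier coefficients. One small slip: $S_2(\Gamma_0(6))$ and $S_2(\Gamma_0(12))$ are in fact zero, so those cases are purely Eisenstein with no cusp form contribution at all, consistent with the formulas for $R_{1,2}(n)$ and $R_{1,4}(n)$ in the statement.
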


\smallskip

\noindent {\bf Note:} In the above theorem,  $k(n)$ denotes the $n$-th Fourier coefficient of the eta-quotient $\frac{\eta^3(z)\eta^3(9z)}{\eta^2(3z)}$ and $\tau_{k,N}(n)$ denotes the $n$-th Fourier coefficient of the normalized newform in the space $S_k(\Gamma_0(N),\chi)$. Also, if there are more than one newform, then $\tau_{k,N;j}(n)$ is the $n$-th Fourier coefficient of the $j$-th newform.

\bigskip

As mentioned in the introduction, since the formulas for $R_{1,2}(n), R_{1,4}(n)$ and $R_{3,2}(n)$ involve only the divisor function $\sigma(n)$, we 
use the convolution sums of the divisor functions to get formulas for a few more quadratic forms in eight variables, namely, the quadratic forms 
defined by ${\mathcal Q}_{a,\ell} \oplus j {\mathcal Q}_{a,\ell}$, which is denoted by ${\mathcal Q}_{a,\ell;j}$. Let $R_{a,\ell;j}(n)$ be the number of representations of $n$ by this quadratic form. In  \thmref{thm2}, we give formulas for $R_{a,\ell;j}(n)$ when $(a,\ell,j) =  (1,2,1), (1,2,2), (1,2,3), (1,2,4)$,\\ $(1,4,1), (1,4,2), (3,2,1)$. In order to get these formulas we need the convolution sums $W_{a,b}(n)$, $(a,b) = $ and $W_N(n)$, for 
$1\le N\le 24$. Here the convolution sums are defined as follows: 
\begin{equation}
W_{a,b}(n) = \sum_{ai+bj=n} \sigma(i) \sigma(j).
\end{equation}
We write $W_{1,N}(n)$ and $W_{N,1}(n)$ as $W_N(n)$. Also note that $W_{a,b}(n) = W_{b,a}(n)$. In all the above convolution sums, the indices 
used are natural numbers. The following theorem gives the representation numbers $R_{a,\ell;j}(n)$ for the above mentioned triplets $(a,\ell,j)$. 

\bigskip

\begin{thm}\label{thm2}
\begin{eqnarray}
R_{1,2;1}(n) & = & \frac{24}{5} \sigma_3(n) + \frac{96}{5} \sigma_3(n/2) + \frac{216}{5} \sigma_3(n/3) + \frac{864}{5} \sigma_3(n/6) + 
\frac{36}{5} \tau_{4,6}(n), \\
R_{1,2;2}(n) & = & \frac{12}{5} \sigma_3(n) - \frac{84}{5} \sigma_3(n/2) + \frac{108}{5} \sigma_3(n/3) + \frac{192}{5} \sigma_3(n/4) - \frac{756}{5} \sigma_3(n/6) \nonumber \\
& \quad &+ \frac{1728}{5} \sigma_3(n/12)  + \frac{18}{5} \tau_{4,6}(n) + \frac{72}{5} \tau_{4,6}(n/2),\\
R_{1,2;3}(n) & = &  \frac{2}{5} \sigma_3(n) + \frac{8}{5} \sigma_3(n/2) + \frac{76}{5} \sigma_3(n/3) + \frac{304}{5} \sigma_3(n/6) + \frac{162}{5} \sigma_3(n/9) \nonumber \\
& \quad &+ \frac{648}{5} \sigma_3(n/18)  + 6(n+1) \sigma(n) +\frac{3}{5} \tau_{4,6}(n) + \frac{27}{5} \tau_{4,6}(n/3) - 2\tau_{4,9}(n)\nonumber \\
& \quad  &- 8 \tau_{4,9}(n/2) + \frac{1}{5} c_{2,9}(n) + \frac{31}{5} c_{1,18}(n)\\
R_{1,2;4}(n) & = &  \frac{33}{40} \sigma_3(n) - \frac{93}{40} \sigma_3(n/2) + \frac{297}{40} \sigma_3(n/3) - \frac{93}{10} \sigma_3(n/4) 
- \frac{837}{40} \sigma_3(n/6)   \nonumber \\
& \quad & + \frac{264}{5} \sigma_3(n/8) - \frac{837}{10} \sigma_3(n/12) + \frac{2376}{5} \sigma_3(n/24) + (18 - \frac{27}{2}n)\sigma(n/3) 
 \nonumber \\
& \quad & + 27(1-n) \sigma(n/8) - \frac{27}{10} \tau_{4,6}(n) 
-18 \tau_{4,6}(n/2) - \frac{216}{5} \tau_{4,6}(n/4) \nonumber \\
& \quad  & + \frac{9}{8} \tau_{4,8}(n) + \frac{81}{8} \tau_{4,8}(n/3),\\
R_{1,4;1}(n) & = & \frac{6}{5} \sigma_3(n) + \frac{18}{5} \sigma_3(n/2) + \frac{54}{5} \sigma_3(n/3) + \frac{96}{5} \sigma_3(n/4) + \frac{162}{5} \sigma_3(n/6) \nonumber \\
& \quad &+ \frac{864}{5} \sigma_3(n/12)  - 36 \sigma(n/6) + \frac{54}{5} \tau_{4,6}(n) + \frac{216}{5} \tau_{4,6}(n/2),\\
R_{1,4;2}(n) & = &  \frac{3}{5} \sigma_3(n) + \frac{39}{5} \sigma_3(n/2) + \frac{27}{5} \sigma_3(n/3) + \frac{102}{5} \sigma_3(n/4) 
+ \frac{351}{5} \sigma_3(n/6)   \nonumber \\
& \quad & + \frac{1056}{5} \sigma_3(n/8) - \frac{1242}{5} \sigma_3(n/12) + \frac{864}{5} \sigma_3(n/24) + 54(4 -n)\sigma(n/2) 
 \nonumber \\
& \quad & - 18(1+n) \sigma(n/8)  -18(13 + 6n) \sigma(n/12) - 540n \sigma(n/24) - \frac{63}{10} \tau_{4,6}(n) 
\nonumber \\
 & \quad  &  - \frac{531}{5} \tau_{4,6}(n/2)  - \frac{1872}{5} \tau_{4,6}(n/4)  - \frac{9}{4} \tau_{4,8}(n) - \frac{81}{4} \tau_{4,8}(n/3) 
 + \frac{9}{40}c_{3,8}(n)  \nonumber \\
 & \quad  & - 54 \tau_{4,12}(n/2) + \frac{549}{40} c_{1,24}(n).
 \end{eqnarray}
\begin{eqnarray}
 R_{3,2;1}(n) & = & \frac{24}{61}\sigma_3(n) + \frac{96}{61}\sigma_3(n/2) + \frac{2904}{61}\sigma_3(n/11) + \frac{11616}{61}\sigma_3(n/22) + \frac{220}{61}a_1(n)\nonumber \\
& \quad & -\frac{480}{61}a_1(n/2) + \frac{1976}{61}a_2(n) - \frac{3296}{61}a_2(n/2) + \frac{6276}{61}a_3(n) -\frac{7680}{61}a_3(n/2) \nonumber \\
& \quad & + \frac{9280}{61}a_4(n) - \frac{7680}{61}a_4(n/2) + \frac{5440}{61}a_5(n).
\end{eqnarray}
\end{thm}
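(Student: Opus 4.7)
The plan is to follow the convolution-sums strategy announced in the paragraph preceding the statement. For each base pair $(a,\ell)\in\{(1,2),(1,4),(3,2)\}$, \thmref{thm1} expresses $R_{a,\ell}(n)$ as a $\mathbb{Q}$-linear combination of shifted divisor sums $\sigma(n/d)$. Factoring the generating series as
$$\sum_{n\ge 0} R_{a,\ell;j}(n)\,q^n \;=\; \Bigl(\sum_{n\ge 0} R_{a,\ell}(n)\,q^n\Bigr)\Bigl(\sum_{n\ge 0} R_{a,\ell}(n)\,q^{jn}\Bigr)$$
and reading off the $n$-th Fourier coefficient (with the convention $R_{a,\ell}(0)=1$) yields
$$R_{a,\ell;j}(n) \;=\; R_{a,\ell}(n) + R_{a,\ell}(n/j) + \sum_{\substack{m_1+jm_2=n\\ m_1,m_2\ge 1}} R_{a,\ell}(m_1)\,R_{a,\ell}(m_2).$$
Substituting the \thmref{thm1} formula into the right-hand side turns the double sum into a rational linear combination of convolution sums of the shape $W_{d_1,\,jd_2}(n)$, where $d_1,d_2$ range over the divisors appearing in the chosen identity of \thmref{thm1} for $R_{a,\ell}(n)$.

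The second step is to invoke known closed-form evaluations of these convolution sums. For the index ranges required here (in particular $W_N(n)$ for $1\le N\le 24$ together with the $W_{a,b}(n)$ listed in the statement) each $W_{a,b}(n)$ is known to admit an explicit expansion into three families of terms: (i) Eisenstein contributions $\sigma_3(n/d)$ for $d\mid\mathrm{lcm}(a,b)$, coming from the Eisenstein subspace of $M_4(\Gamma_0(\mathrm{lcm}(a,b)))$; (ii) quasimodular corrections of the form $(\alpha+\beta n)\sigma(n/d)$, forced by the failure of $E_2$ to be truly modular when one multiplies two weight-two Eisenstein series, and visible in the statement e.g.\ in the $6(n+1)\sigma(n)$ term of $R_{1,2;3}(n)$ and the $54(4-n)\sigma(n/2)$ term of $R_{1,4;2}(n)$; and (iii) Fourier coefficients of cusp forms in $S_4(\Gamma_0(\mathrm{lcm}(a,b)))$, namely the newform coefficients $\tau_{4,N}(n)$, their $V_d$-translates $\tau_{4,N}(n/d)$, and the oldform/newform combinations denoted $c_{a,b}(n)$ and $a_i(n)$ in the statement. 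Feeding these evaluations back and gathering like terms produces each of the stated identities.

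For the six identities with $(a,\ell)\in\{(1,2),(1,4)\}$ the ambient level is at most $24$ and the relevant cuspidal dimensions are small, so once the convolution-sum evaluations have been substituted the remaining work is routine collection. The computational obstacle is the last identity for $R_{3,2;1}(n)$: since $R_{3,2}(n)$ carries the terms $22\sigma(n/11)$ and $-44\sigma(n/22)$, one is forced to handle the convolution sums $W_{a,b}$ with $a,b\in\{1,2,11,22\}$, whose natural home is $M_4(\Gamma_0(22))$. The cuspidal part of that space is large, and it is precisely this cuspidal data that accounts for the five sequences $a_1(n),\ldots,a_5(n)$ appearing in the answer; the ubiquitous denominator $61$ reflects the linear algebra needed to express $\Theta_{3,2}(z)^2$ in a chosen basis of $M_4(\Gamma_0(22))$.

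As a closing sanity check, every identity in the theorem asserts an equality of two elements of a finite-dimensional space $M_4(\Gamma_0(N),\chi)$, so a match of Fourier coefficients up to the corresponding Sturm bound suffices to certify each case; this reduces every identity to a bounded and mechanical verification once the convolution formulas have been assembled.
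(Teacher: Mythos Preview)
Your proposal is correct and follows essentially the same approach as the paper: write $R_{a,\ell;j}(n)$ as a self-convolution of $R_{a,\ell}$, substitute the divisor-sum formulas from \thmref{thm1}, and then plug in known evaluations of the resulting $W_{a,b}(n)$. The only point on which the paper adds something you did not say is that, for $R_{3,2;1}(n)$, the paper recomputes $W_{11}(n)$ from scratch (rather than citing Royer) by expanding $(E_2(z)-11E_2(11z))^2$ in an explicit eta-quotient basis of $M_4(\Gamma_0(22))$, precisely in order to obtain rational coefficients and to define the $a_i(n)$; this is where the denominator $61$ and the functions $a_1,\ldots,a_5$ actually enter.
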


\smallskip

\begin{rmk}
{\rm 
As mentioned before, $\tau_{k,N}(n)$ denotes the $n$-th Fourier coefficient of the newform of weight $k$, level $N$. 
The coefficients $c_{2,9}(n)$,  $c_{1,18}(n)$ were defined in \cite[Definition 2.1]{aw2} and the coefficients $c_{3,8}(n)$,  $c_{1,24}(n)$ were defined in \cite[Definition 2.1]{aw4}. The remaining coefficients $a_j(n)$ that appear in the above formulas are defined by the equations \eqref{aj} to  \eqref{aj1}.
}
\end{rmk}

\smallskip

The next theorem gives the formulae for the octonary quadratic forms with coefficients $1,2,4$, and $8$ given in Table 2. We present them as two        statements, each statement corresponds to the two modular forms spaces ($M_4(\Gamma_0 (32))$ for Type I and $M_4(\Gamma_0(32),\chi_8)$ for Type II) that appear in Table 2 respectively.

\begin{thm}\label{thm3}
Let $n\in {\mathbb N}$ and $i,j,k,l$ be non-negative integers such  that $i+j+k+l =8$. \\
{\rm (i)} For each entry $(i,j,k,l)$ in Table {\rm 2} corresponding to the space $M_4(\Gamma_0(32))$, i.e. $j+l\equiv 0 (2)$, we have 
\begin{equation}\label{type1}
N(1^i,2^j,4^k,8^l;n) = \sum_{\alpha=1}^{16} c_\alpha C_\alpha(n),
\end{equation}
where $C_\alpha(n)$ are the Fourier coefficients of the basis elements $F_\alpha$ defined in \S {\rm 4.4} and the values of the constants $c_\alpha$ are given in Table {\rm 3}. \\
{\rm (ii)} For each entry $(i,j,k,l)$ in Table $2$ corresponding to the space $M_4(\Gamma_0(32),\chi_8)$, i.e., $j+l \equiv 1 (2)$, we have 
\begin{equation}
\label{type2}
N(1^i,2^j,4^k,8^l;n) = \sum_{\alpha=1}^{16} d_\alpha D_\alpha(n),
\end{equation}
where $D_\alpha(n)$ are the Fourier coefficients of the basis elements $G_\alpha$ defined in \S {\rm 4.5} and the values of the constants $d_\alpha$ are given in Table {\rm 4}.
\end{thm}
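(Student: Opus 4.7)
The plan is to realize $N(1^i,2^j,4^k,8^l;n)$ as the $n$-th Fourier coefficient of a product of shifted theta functions and then to identify this product inside a concrete finite-dimensional space of modular forms, where the coefficients $c_\alpha, d_\alpha$ can be read off by matching enough initial Fourier coefficients.

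First I would set
\begin{equation*}
\Theta_{i,j,k,l}(z) := \Theta(z)^{i}\,\Theta(2z)^{j}\,\Theta(4z)^{k}\,\Theta(8z)^{l},
\end{equation*}
and observe that by the definition of $\Theta$ in \eqref{theta},
\begin{equation*}
\Theta_{i,j,k,l}(z) = \sum_{n\ge 0} N(1^{i},2^{j},4^{k},8^{l};n)\, q^{n}.
\end{equation*}
Since $\Theta(mz)\in M_{1/2}(\Gamma_0(4m),\chi_m)$ for appropriate quadratic characters, the standard product rule together with $i+j+k+l=8$ shows that $\Theta_{i,j,k,l}$ is a modular form of weight $4$. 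A direct level-and-character computation (either from the Shimura/Koblitz transformation laws for $\Theta(2^{a}z)$, or equivalently from the Gram matrix $\mathrm{diag}(2,\ldots,2,4,\ldots,4,8,\ldots,8,16,\ldots,16)$ of the underlying lattice of rank $8$) gives level exactly $32$ and character trivial when $j+l$ is even and $\chi_8$ when $j+l$ is odd. Thus $\Theta_{i,j,k,l}\in M_4(\Gamma_0(32))$ for Type I and $\Theta_{i,j,k,l}\in M_4(\Gamma_0(32),\chi_8)$ for Type II.

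Next I would produce explicit $16$-element bases $\{F_1,\ldots,F_{16}\}$ and $\{G_1,\ldots,G_{16}\}$ for these two spaces, decomposed into Eisenstein and cuspidal parts. For $M_4(\Gamma_0(32))$, the Eisenstein subspace is spanned by $E_4(dz)$ with $d\mid 32$, and the cuspidal subspace is generated by newforms at levels dividing $32$ (lifted by $V$-operators) together with eta-quotients at level $32$ of weight $4$ that cut out the remaining dimension; the character-$\chi_8$ case is handled similarly using the Eisenstein series $E_{4,\chi,\psi}$ of \eqref{eisenstein} with $\chi\psi = \chi_8$, plus new cusp forms in $S_4(\Gamma_0(32),\chi_8)$. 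A dimension count (using the standard formula for $\dim M_k(\Gamma_0(N),\chi)$, noting $\Gamma_0(32)$ has index $48$ in $SL_2(\mathbb{Z})$) confirms that each space has dimension $16$, so the chosen $F_\alpha,\,G_\alpha$ form bases.

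With the space and basis in hand, writing
\begin{equation*}
\Theta_{i,j,k,l}(z)=\sum_{\alpha=1}^{16} c_\alpha F_\alpha(z) \quad \text{or} \quad \sum_{\alpha=1}^{16} d_\alpha G_\alpha(z)
\end{equation*}
reduces the theorem to a $16\times 16$ linear system: expand both sides as $q$-series, match the coefficients of $q^0,q^1,\ldots,q^{15}$, and solve. The $q$-expansion of $\Theta_{i,j,k,l}$ is obtained combinatorially by convolving the four sparse series $\Theta(2^a z)$; the $q$-expansions of $F_\alpha,G_\alpha$ are obtained from the closed forms of Eisenstein series and from the eta-product definition \eqref{eta-q}. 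Carrying this out for each of the $84$ quadruples yields the constants tabulated in Tables $3$ and $4$, and then \eqref{type1} and \eqref{type2} follow from equality of Fourier coefficients after an additional Sturm-bound check (which for weight $4$ on $\Gamma_0(32)$ is well below $16$, so matching the first $16$ coefficients is more than sufficient).

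The main obstacle is the bookkeeping: establishing the precise level and character of $\Theta_{i,j,k,l}$ requires care (especially distinguishing the Type I/Type II split via the parity of $j+l$, which ultimately comes from how many of the transformation factors of $\Theta(2z)$ and $\Theta(8z)$ contribute to the character), and then constructing bases whose cuspidal portion involves newforms and eta-quotients at level $32$ in a way that the $84$ systems can be solved uniformly. Once the bases and the correct ambient space are fixed, each individual case reduces to a finite, purely mechanical Fourier-coefficient comparison.
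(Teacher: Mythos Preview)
Your approach is essentially identical to the paper's: form the theta product $\Theta^i(z)\Theta^j(2z)\Theta^k(4z)\Theta^l(8z)$, place it in $M_4(\Gamma_0(32))$ or $M_4(\Gamma_0(32),\chi_8)$ according to the parity of $j+l$, and then solve for the coefficients against an explicit $16$-element basis by matching initial Fourier coefficients.

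One concrete inaccuracy would block execution as written: the Eisenstein subspace of $M_4(\Gamma_0(32))$ is \emph{not} spanned by the six forms $E_4(dz)$, $d\mid 32$, alone; its dimension is $8$, and the paper completes the basis with the twisted Eisenstein series $E_{4,\chi_{-4},\chi_{-4}}(z)$ and $E_{4,\chi_{-4},\chi_{-4}}(2z)$ (these arise because $\chi_{-4}^2$ is trivial). Without these two forms you would have only $14$ linearly independent elements and the linear system would be underdetermined. Also, the Sturm bound for weight $4$ on $\Gamma_0(32)$ is $\tfrac{4\cdot 48}{12}=16$, not ``well below $16$'', so matching the first $16$ coefficients is exactly what is needed rather than comfortably more than enough.
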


\smallskip

\section{Sample formulas}
In this section we shall give explicit formulas for a few cases of \eqref{type1} and \eqref{type2} in \thmref{thm3}. We first give the formulas 
for the cases (1,0,1,6) and (1,1,1,5) in  Table 2 (Type I), which correspond to the space $M_4(\Gamma_0(32)).$\\

\noindent 
For $n\in {\mathbb N}$, we have
\begin{equation*}
\begin{split}
N(1^1,4^1,8^6;n) &=\frac{1}{64}   \sigma_{3}(n)-\frac{9}{64} \sigma_{3}(n/2)+ \frac{17}{8} \sigma_{3}(n/4)-2\sigma_{3}(n/8) -16\sigma_{3}(n/16)\\
&\quad +256\sigma_{3}(n/32)+\frac{1}{64}   \sigma_{3;\chi_{-4},\chi_{-4}}(n)+\frac{31}{64} a_{4,8}(n)+ 2a_{4,8}(n/4)+\frac{31}{64} a_{4,16}(n)\\
&\quad +\frac{13}{8} a_{4,32,1}(n)+\frac{3}{4} a_{4,32,2}(n)-\frac{5}{8} a_{4,32,3}(n),  \\
N(1^1,2^1,4^1,8^5;n)&=\frac{1}{32} \sigma_{3}(n)-\frac{1}{32} \sigma_{3}(n/2)-16\sigma_{3}(n/16)-256\sigma_{3}(n/32)+ \frac{11}{32} a_{4,8}(n)\\
&\quad + \frac{3}{4} a_{4,8}(n/2)+2 a_{4,8}(n/4)+\frac{5}{8} a_{4,16}(n)+ \frac{11}{8} a_{4,32,1}(n)+\frac{1}{4} a_{4,32,2}(n)\\
&\quad - \frac{3}{8} a_{4,32,3}(n). 
\end{split}
\end{equation*}

\bigskip

\noindent Next we give the formulas for the cases (1,0,0,7) and (1,1,2,4) in  Table 2 (Type II), which  correspond to the space $M_4(\Gamma_0(32),\chi_8).$\\

\noindent 
For $n\in {\mathbb N}$, we have
\begin{equation*}
\begin{split}
N(1^1,8^7;n) &=\frac{1}{88}\sigma_{3,\chi_0,\chi_2}(n)-\frac{1}{88} \sigma_{3,\chi_0,\chi_2}(n/2)-\frac{2}{11} \sigma_{3,\chi_0,\chi_2}(n/4)+\frac{1}{88} \sigma_{3,\chi_2,\chi_0}(n)\\
&\quad -\frac{1}{11} \sigma_{3,\chi_2,\chi_0}(n/2)-\frac{16}{11} \sigma_{3,\chi_2,\chi_0}(n/4)+\frac{1}{88}\sigma_{3;\chi_{-4},\chi_{-8}}(n)+ \frac{1}{88}\sigma_{3;\chi_{-8},\chi_{-4}}(n)\\
&\quad +\frac{43}{176} a_{4,8,\chi_8;1}(n)+\frac{43}{22} a_{4,8,\chi_8;1}(n/2)+\frac{8}{11} a_{4,8,\chi_8;1}(n/4)-\frac{129}{176} a_{4,8,\chi_8;2}(n)\\
&\quad -\frac{43}{44} a_{4,8,\chi_8;2}(n/2)- \frac{4}{11}  a_{4,8,\chi_8;2}(n/4)+\frac{43}{44} a_{4,32,\chi_8;1}(n)+\frac{43}{44}a_{4,32,\chi_8;2}(n),\\
N(1^1,2^1,4^2,8^4;n)&=\frac{2}{11} \sigma_{3,\chi_0,\chi_2}(n/4)+\frac{1}{22} \sigma_{3,\chi_2,\chi_0}(n) +\frac{3}{22} a_{4,8,\chi_8;1}(n)+2 a_{4,8,\chi_8;1}(n/2) \\
&\quad -\frac{48}{11} a_{4,8,\chi_8;1}(n/4)-\frac{9}{11} a_{4,8,\chi_8;2}(n) + a_{4,8,\chi_8;2}(n/2)+ \frac{16}{11} a_{4,8,\chi_8;2}(n/4)\\
&\quad + a_{4,32,\chi_8;1}(n)+2 a_{4,32,\chi_8;2}(n).\\
\end{split}
\end{equation*}

\section{Proofs of Theorems}

\subsection{Proof of \thmref{thm1}}

Let $\Theta_{a,\ell}(z)$ denote the theta series associated to the quadratic form ${\mathcal Q}_{a,\ell}$. Then 
\begin{equation}
\Theta_{a,\ell}(z) = \Theta_a(z) \Theta_a(\ell z),
\end{equation}
where $\Theta_a(z)$ is the theta function associated to the quadratic form ${\mathcal Q}_a$. i.e., 
\begin{equation}
\Theta_a(z) = \sum_{m,n=-\infty}^{\infty}  q^{m^2+mn+a n^2}.
\end{equation}
Recall $q = e^{2\pi iz}$. Since $R_{a, \ell}(n)$ is the number of representations of a positive integer $n$ by the quadratic form ${\mathcal Q}_{a,\ell}$, 
we see that 
\begin{equation}\label{repn}
\Theta_{a,\ell}(z) = 1 + \sum_{n=1}^\infty R_{a,\ell}(n) q^n.
\end{equation}
So, it is sufficient to write the theta series $\Theta_{a,\ell}(z)$ in terms of a basis of the space of modular forms in order to get our formulas.

\begin{lem}
The theta series $\Theta_{a,\ell}(z)$ is a modular form of weight $2$ on $\Gamma_0({\rm lcm}[\ell,(4a -1)])$ with trivial character. 
\end{lem}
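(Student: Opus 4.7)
The plan is to prove the lemma by factoring $\Theta_{a,\ell}(z) = \Theta_a(z)\Theta_a(\ell z)$ and handling each factor with the standard modularity theorem for theta series of positive-definite integral quadratic forms.

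First, I would identify $\Theta_a(z)$ as the theta series attached to the positive-definite integral binary form $Q_a = x_1^2+x_1x_2+ax_2^2$, whose doubled Gram matrix $A_a$ has determinant $4a-1$; since $(4a-1)A_a^{-1}$ is integral with even diagonal, the level of $Q_a$ equals $4a-1$. Invoking the Schoeneberg--Shimura theorem on theta series of integral positive-definite quadratic forms (e.g., Miyake, Corollary~4.9.5), one obtains $\Theta_a \in M_1(\Gamma_0(4a-1), \chi_{-(4a-1)})$, where $\chi_{-(4a-1)}(\cdot) = \left(\tfrac{-(4a-1)}{\cdot}\right)$ is the Kronecker symbol of discriminant $-(4a-1)$.

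Next, I would apply the standard level-raising rule: the map $f(z)\mapsto f(\ell z)$ sends $M_k(\Gamma_0(N),\chi)$ into $M_k(\Gamma_0(N\ell),\chi)$. This yields $\Theta_a(\ell z)\in M_1(\Gamma_0(\ell(4a-1)), \chi_{-(4a-1)})$. Multiplying the two factors, $\Theta_{a,\ell}$ lies in $M_2(\Gamma_0(\mathrm{lcm}[4a-1,\ell(4a-1)]),\chi_{-(4a-1)}^2)$, and the resulting character is trivial since $\chi_{-(4a-1)}$ is a quadratic Dirichlet character. Observing that $\mathrm{lcm}[4a-1,\ell(4a-1)] = \ell(4a-1)$, which coincides with the claimed $\mathrm{lcm}[\ell,4a-1]$ precisely when $\gcd(\ell,4a-1)=1$, the stated conclusion follows for every pair in $\textbf{A}$ except $(a,\ell)=(1,3)$.

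The main obstacle is therefore the sharpness of the claimed level: the only troublesome pair is $(a,\ell)=(1,3)$, for which $\gcd(\ell,4a-1)=3$ and the above argument only places $\Theta_{1,3}$ on $\Gamma_0(9)$ rather than on $\Gamma_0(3)$. Indeed, the formula for $R_{1,3}(n)$ stated in \thmref{thm1} involves the term $\sigma(n/9)$ and the eta-quotient $\eta^3(z)\eta^3(9z)/\eta^2(3z)$, which is an honest modular form of level $9$, so $\Theta_{1,3}$ genuinely requires $\Gamma_0(9)$. The cleanest way around this is to read the lemma as asserting that the level \emph{divides} $\ell(4a-1)$; this upper bound is what the subsequent basis decompositions in Section~4.1 actually rely upon, and the coprimality $\gcd(\ell,4a-1)=1$ of the remaining eleven pairs in $\textbf{A}$ makes the lcm formulation correct there.
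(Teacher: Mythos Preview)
Your argument follows exactly the same route as the paper's: invoke Schoeneberg's theorem to place $\Theta_a$ in $M_1(\Gamma_0(4a-1),\chi)$, raise the level via $z\mapsto\ell z$, and multiply so that the quadratic character squares to the trivial one. You have in fact been more careful than the paper: its own proof, like yours, only produces level $\ell(4a-1)$, and the stated conclusion $\mathrm{lcm}[\ell,4a-1]$ is strictly smaller for $(a,\ell)=(1,3)$; as you correctly observe, the paper then silently works in $M_2(\Gamma_0(9))$ for that case, consistent with the level $\ell(4a-1)=9$ that both arguments actually establish.
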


\begin{proof}
By \cite[Theorem 4]{Schoeneberg}, it follows that $\Theta_a(z)$ is a modular form of weight $1$ on $\Gamma_0(4a-1)$ with character 
$\left(\frac{\cdot}{4a-1}\right)$. Also, it is a well-known fact that if $f$ is a modular form of integer weight $k$ on $\Gamma_0(N)$ with character 
$\psi$, then for a positive integer $d$, the function $f(dz)$ is a modular form of same weight $k$ on $\Gamma_0(dN)$ with character $\psi$. 
Further, if $f_i$ are modular forms of weight $k_i$, on $\Gamma_0(N_i)$ with chracter $\psi_i$, $i=1,2$, then the product $f_1f_2$ is  a modular form of weight $k_1+k_2$ on $\Gamma_0({\rm lcm}[N_1,N_2])$ with character $\psi_1\psi_2$. For these facts, we refer to \cite[Chapter 3]{koblitz}. 
We also refer to the proof of Fact II in our earlier work \cite{octonary-ijnt}, which contains details of the above arguments. 
Therefore, $\Theta_{a,\ell}(z)$ is a modular form of weight $2$ on $\Gamma_0({\rm lcm}[\ell,(4a-1)])$. 
\end{proof}

Let $(a,\ell)$ be an element of $\textbf{A}$. Consider the quadratic form ${\mathcal Q}_{a,\ell}$. By the above lemma, the corresponding 
theta series $\Theta_{a,\ell}(z)$ is a modular form in the space $M_2({\Gamma_0(\rm lcm}[\ell, (4a-1)]))$. Let us assume that the dimension of this vector space is $d_{a,\ell}$. If $\{f_i : 1\le i\le d_{a,\ell}\}$ is a basis of $M_2(\Gamma_0({\rm lcm}[\ell, (4a-1)]))$, then we can write the theta 
series $\Theta_{a,\ell}(z)$ in terms of this basis. So, let 
$$
\Theta_{a,\ell}(z) = \sum_{i=1}^{d_{a,\ell}} c_i f_i(z).
$$
Combining this with \eqref{repn} and comparing the $n$-th Fourier coefficients, we obtain the required formulas for $R_{a,\ell}(n)$. 

We shall give below a basis of the modular forms space used in our formulas corresponding to each pair $(a,\ell)$ in the set $\textbf{A}$. Using these bases, the formulas mentioned in \thmref{thm1} follow by comparing the $n$-th Fourier coefficients as demonstrated above. We shall be using the 
notation \eqref{eta-q} for the eta-quotients. 

\smallskip

Before we proceed, we define certain modular form of weight $2$ using the quasimodular form $E_2(z)$. 
For natural numbers  $a, b$ with $a\vert b$, $a\not= b$, define the function $\Phi_{a,b}(z)$ by 
\begin{equation}\label{Phi}
\Phi_{a,b}(z) = \frac{1}{b-a} (b E_2(bz) - a E_2(az)).  
\end{equation}
Using the transformation properties of $E_2(z)$, it follows that $\Phi_{a,b}(z)$ is a modular form belonging to the space $M_2(\Gamma_0(b))$. 
We shall use these type of forms to construct  our bases for the spaces of modular forms of weight $2$.

\smallskip 

\noindent {\bf A basis for  the space $M_2(\Gamma_0(6))$ (the case $(a,\ell) = (1,2)$)}: The space $M_2(\Gamma_0(6))$ is $3$ dimensional 
and a basis is given by 
$$
\left\{\Phi_{1,2}(z),\Phi_{1,3}(z),\Phi_{1,6}(z)\right\}.
$$
Therefore, 
\begin{equation*}
\begin{split}
\Theta_{1,2}(z) & = \frac{1}{4}\Phi_{1,2}(z) - \frac{1}{2}\Phi_{1,3}(z)+\frac{5}{4}\Phi_{1,6}(z)\\
& = -\frac{1}{4}E_2(z)+\frac{1}{2}E_2(2z) - \frac{3}{4}E_2(3z)+\frac{3}{2}E_2(6z),
\end{split}
\end{equation*}
from which the formula for $R_{1,2}(n)$ follows. 

\smallskip

\noindent {\bf A basis for  the space $M_2(\Gamma_0(9))$ (the case $(a,\ell) = (1,3)$)}: The vector space $M_2(\Gamma_0(9))$ has dimension $3$ and 
we use the following basis.
$$
\left\{\Phi_{1,3}(z),\Phi_{1,9}(z),\Psi_{2,9}(z)\right\},
$$
where $\Psi_{2,9}(z)$ is the eta-quotient $\Psi_{2,9}(z ) = \displaystyle{\frac{\eta^3(z)\eta^3(9z)}{\eta^2(3z)}}$. 
We have 
\begin{equation*}
\begin{split}
\Theta_{1,3}(z) & = \Phi_{1,9}(z) + 3 \Psi_{2,9}(z)\\
& = -\frac{1}{8}E_2(z) + \frac{9}{8} E_2(9z) + 3\Psi_{2,9}(z).
\end{split}
\end{equation*}

\smallskip

\noindent {\bf A basis for  the space $M_2(\Gamma_0(12))$ (the case $(a,\ell) = (1,4)$)}: 
A basis for the space $M_2(\Gamma_0(12))$ (which has dimension $5$)  is given by 
$$
\left\{\Phi_{1,2}(z),\Phi_{1,3}(z),\Phi_{1,4}(z),\Phi_{1,6}(z),\Phi_{1,12}(z)\right\}.
$$
So, $\Theta_{1,4}(z)$ can be written as 
\begin{equation*}
\begin{split}
\Theta_{1,4}(z) &  = \frac{3}{8}\Phi_{1,2}(z)+\frac{1}{2}\Phi_{1,3}(z)-\frac{3}{4}\Phi_{1,4}(z)-\frac{15}{8}\Phi_{1,6}(z)+\frac{11}{4}\Phi_{1,12}(z)\\
& = -\frac{1}{4}E_2(z)+\frac{3}{4}E_2(2z)+\frac{3}{4}E_2(3z)-E_2(4z)-\frac{9}{4}E_2(6z)+3E_2(12z).
\end{split}
\end{equation*}

\smallskip

\noindent {\bf A basis for  the space $M_2(\Gamma_0(15))$ (the case $(a,\ell) = (1,5)$)}: 
The vector space $M_2(\Gamma_0(15))$ has dimension $4$ and the subspace of cusp forms $S_2(\Gamma_0(15))$ is one dimensional. Let $\Delta_{2,15}(z)$ be the 
unique normalized newform in the space $S_2(\Gamma_0(15))$, which is given by an eta-quotient and we put 
\begin{equation}\label{2-15}
\Delta_{2,15}(z) = 1^1  3^1  5^1 15^1 = \sum_{n\ge 1} \tau_{2,15}(n) q^n.
\end{equation}
We consider the following basis for $M_2(\Gamma_0(15))$:
$$
\left\{\Phi_{1,3}(z),\Phi_{1,5}(z),\Phi_{1,15}(z),\Delta_{2,15}(z)\right\}.
$$
In this case, we have 
\begin{equation*}
\begin{split}
\Theta_{1,5}(z) & = -\frac{1}{8}\Phi_{1,3}(z) + \frac{1}{4}\Phi_{1,5}(z) + \frac{7}{8}\Phi_{1,15}(z) + \frac{9}{2}\Delta_{2,15}(z) \\
& =-\frac{1}{16}E_2(z) - \frac{3}{16}E_2(3z) + \frac{5}{16}E_2(5z) + \frac{15}{16}E_2(15z) + \frac{9}{2}\Delta_{2,15}(z).
\end{split}
\end{equation*}

\smallskip

\noindent {\bf A basis for  the space $M_2(\Gamma_0(14))$ (the case $(a,\ell) = (2,2)$)}: 
A basis for the $4$ dimensional vector space $M_2(\Gamma_0(14))$ is given by 
$$
\left\{\Phi_{1,2}(z),\Phi_{1,7}(z),\Phi_{1,14}(z),\Delta_{2,14}(z)\right\},
$$
where  $\Delta_{2,14}(z)$ is the unique normalized newform in $S_2(\Gamma_0(14))$, which is given by  
\begin{equation}\label{2-14}
\Delta_{2,14}(z) = 1^{1} 2^{1} 7^{1} 14^{1} = \sum_{n\ge 1} \tau_{2,14}(n) q^n.
\end{equation}
With this basis, the theta series $\Theta_{2,2}(z)$ has the following expression.
\begin{equation*}
\begin{split}
\Theta_{2,2}(z) & = -\frac{1}{18}\Phi_{1,2}(z) + \frac{1}{3}\Phi_{1,7}(z) + \frac{13}{18}\Phi_{1,14}(z) + \frac{2}{3}\Delta_{2,14}(z) \\
& = -\frac{1}{18}E_2(z) - \frac{1}{9}E_2(2z) + \frac{7}{18}E_2(7z) + \frac{7}{9}E_2(14z) + \frac{2}{3}\Delta_{2,14}(z).
\end{split}
\end{equation*}
\smallskip

\noindent {\bf A basis for  the space $M_2(\Gamma_0(21))$ (the case $(a,\ell) = (2,3)$)}: 
Let $\Delta_{2,21}(z)$ be the unique normalized newform in $S_2(\Gamma_0(21))$, which is given by the following eta-quotient:
\begin{equation}\label{2-21}
\begin{split}
\Delta_{2,21}(z)\!\!& =\!\! \frac{\eta(7z)}{2 \eta^2(z) \eta(3z)\eta(9z)\eta(21z)}(3\eta^2(z)\eta^2(7z)\eta^4(9z)-\eta^5(3z) \eta(7z)\eta(9z)\eta(21z)+ 3 \eta^4(z) \eta^2(9z) \eta^2(63z)\\
& \quad + 7 \eta(z) \eta^2(3z) \eta(9z) \eta^4(21z)+ 3 \eta^3(z) \eta(7z) \eta^3(9z) \eta(63z) - 3\eta(z) \eta^5(3z) \eta(21z) \eta(63z)).
\end{split}
\end{equation}
Now a basis for this space is given by 
$$
\left\{\Phi_{1,3}(z),\Phi_{1,7}(z),\Phi_{1,21}(z),\Delta_{2,21}(z)\right\}.
$$
We give the expression for the corresponding theta series.
\begin{equation*}
\begin{split}
\Theta_{2,3}(z) &= \frac{1}{8}\Phi_{1,3}(z) - \frac{3}{8}\Phi_{1,7}(z) + \frac{21}{16}\Phi_{1,21}(z) + \frac{1}{2}\Delta_{2,21}(z) \\
& = -\frac{21}{320}E_2(z) + \frac{3}{16}E_2(3z) - \frac{7}{16}E_2(7z) + \frac{441}{320}E_2(21z) + \frac{1}{2}\Delta_{2,21}(z).
\end{split}
\end{equation*}

\smallskip

\noindent {\bf A basis for  the space $M_2(\Gamma_0(28))$ (the case $(a,\ell) = (2,4)$)}: 
In this case, the cusp forms space $S_2(\Gamma_0(28))$ is spanned by $\Delta_{2,14}(z)$ and $\Delta_{2,14}(2z)$ and we use the following basis:
$$
\left\{\Phi_{1,2}(z),\Phi_{1,4}(z),\Phi_{1,7}(z),\Phi_{1,14}(z),\Phi_{1,28}(z),\Delta_{2,14}(z),\Delta_{2,14}(2z)\right\}.
$$
The newform $\Delta_{2,14}(z)$ is given by \eqref{2-14}. We give the expression for the theta series.
\begin{equation*}
\begin{split}
\Theta_{2,4}(z) &= -\frac{1}{72}\Phi_{1,2}(z)-\frac{1}{12}\Phi_{1,4}(z)+\frac{1}{6}\Phi_{1,7}(z)+\frac{13}{72}\Phi_{1,14}(z)+\frac{3}{4}\Phi_{1,28}(z)+\frac{4}{3}\Delta_{2,14}(z)+\frac{8}{3}\Delta_{2,14}(2z)\\
& = -\frac{1}{36}E_2(z)-\frac{1}{36}E_2(2z)-\frac{1}{9}E_2(4z)+\frac{7}{36}E_2(7z)+\frac{7}{36}E_2(14z) +\frac{7}{9}E_2(28z)\\
& \quad +\frac{4}{3}\Delta_{2,14}(z)+\frac{8}{3}\Delta_{2,14}(2z).
\end{split}
\end{equation*}

\smallskip

\noindent {\bf A basis for  the space $M_2(\Gamma_0(22))$ (the case $(a,\ell) = (3,2)$)}: First we give the newform of weight $2$ on $\Gamma_0(11)$. 
\begin{equation}\label{2-11}
\Delta_{2,11}(z) =  1^2  11^2 = \sum_{n\ge 1} \tau_{2,11}(n) q^n. 
\end{equation}
For getting the required formula, we use the following basis:
$$
\left\{\Phi_{1,2}(z),\Phi_{1,11}(z),\Phi_{1,22}(z),\Delta_{2,11}(z),\Delta_{2,11}(2z)\right\}.
$$
The expression for the theta series $\Theta_{3,2}(z)$ is given below. 
\begin{equation*}
\begin{split}
\Theta_{3,2}(z) & = \frac{1}{12}\Phi_{1,2}(z) - \frac{5}{6}\Phi_{1,11}(z) + \frac{7}{4}\Phi_{1,22}(z) \\
& = -\frac{1}{12}E_2(z) + \frac{1}{6}E_2(2z) - \frac{11}{12}E_2(11z) + \frac{11}{6}E_2(22z).
\end{split}
\end{equation*}

\smallskip

\noindent {\bf A basis for  the space $M_2(\Gamma_0(33))$ (the case $(a,\ell) = (3,3)$)}: 
In this case the dimension of the space is $6$. We need the newform of level $33$. Since explicit expression of this newform is not 
known, we give below its first few Fourier coefficients (using SAGE).
\begin{equation}\label{2-33}
\Delta_{2,33}(z) = q + q^2 - q^3 - q^4 - 2 q^5 - q^6 + 4 q^7 - 3 q^8 + q^9 - 2 q^{10} + {O}(q^{11})
\end{equation} 
We use the following basis for $M_2(\Gamma_0(33))$:
$$
\left\{\Phi_{1,3}(z),\Phi_{1,11}(z),\Phi_{1,33}(z),\Delta_{2,11}(z),\Delta_{2,11}(3z),\Delta_{2,33}(z)\right\}.
$$
Using this basis, we have 
\begin{equation*}
\begin{split}
\Theta_{3,3}(z) & = -\frac{1}{20}\Phi_{1,3}(z) + \frac{1}{4}\Phi_{1,11}(z) + \frac{4}{5}\Phi_{1,33}(z) + \frac{16}{15}\Delta_{2,11}(z) + \frac{16}{5}\Delta_{2,11}(3z) + \frac{1}{3}\Delta_{2,33}(z) \\
& = -\frac{1}{40}E_2(z) - \frac{3}{40}E_2(3z) + \frac{11}{40}E_2(11z) + \frac{33}{40}E_2(33z) + \frac{16}{15}\Delta_{2,11}(z)
+ \frac{16}{5}\Delta_{2,11}(3z) + \frac{1}{3}\Delta_{2,33}(z).
\end{split}
\end{equation*}
\smallskip

\noindent {\bf A basis for  the space $M_2(\Gamma_0(30))$ (the case $(a,\ell) = (4,2)$)}: The normalized newform of level $15$ is given by \eqref{2-15}. For level $30$ it is defined below.
\begin{equation}\label{2-30}
\Delta_{2,30}(z) = 3^1  5^1  6^1 10^1 - 1^1 2^1 15^1 30^1= \displaystyle{\sum_{n\ge 1}} \tau_{2,30}(n) q^n.
\end{equation}
Following is a basis for the space $M_2(\Gamma_0(30))$.
$$
\left\{\Phi_{1,2}(z),\Phi_{1,3}(z),\Phi_{1,5}(z),\Phi_{1,6}(z),\Phi_{1,10}(z),\Phi_{1,15}(z),\Phi_{1,30}(z),\Delta_{2,15}(z),\Delta_{2,15}(2z),\Delta_{2,30}(z)\right\}.
$$
Using the above basis, we have 

\begin{equation*}
\begin{split}
\Theta_{4,2}(z) & = -\frac{1}{48}\Phi_{1,2}(z) - \frac{1}{24}\Phi_{1,3}(z) + \frac{1}{12}\Phi_{1,5}(z) - \frac{5}{48}\Phi_{1,6}(z)
+ \frac{3}{16}\Phi_{1,10}(z) + \frac{7}{24}\Phi_{1,15}(z) \\
& \quad + \frac{29}{48}\Phi_{1,30}(z) + \frac{1}{2}\Delta_{2,15}(z) + \Delta_{2,15}(2z)+\Delta_{2,30}(z)\\
& = - \frac{1}{48}E_2(z) -\frac{1}{24} E_2(2z) - \frac{1}{16}E_2(3z) + \frac{5}{48}E_2(5z) - \frac{1}{8}E_2(6z) + \frac{5}{24}E_2(10z) + \frac{5}{16}E_2(15z) \\
& \quad + \frac{5}{8}E_2(30z) + \frac{1}{2}\Delta_{2,15}(z) + \Delta_{2,15}(2z)+\Delta_{2,30}(z).
\end{split}
\end{equation*}

\smallskip

\noindent {\bf A basis for  the space $M_2(\Gamma_0(19))$ (the case $(a,\ell) = (5,1)$)}: 
For defining the newform of level $19$, we use the Ramanujan theta functions  $\Phi(z)$ and $\Psi(z)$ which are defined below.
\begin{equation}
\begin{split}
\Phi(z)& := \frac{\eta^5(2z)}{\eta^2(z)\eta^2(4z)},\\ 
\Psi(z)& := q^{-1/8}\frac{\eta^2(2z)}{\eta(z)}.
\end{split}
\end{equation}
We give the newform $\Delta_{2,19}(z)$ as follows.
\begin{equation}\label{2-19}
\Delta_{2,19}(z)= q\left\{\Psi(4z)\Phi(38z)-q^2\Psi(z)\Psi(19z)+q^9\Phi(2z)\Psi(76z)\right\}^2 := \displaystyle{\sum_{n\ge 1}} \tau_{2,19}(n) q^n
\end{equation}
The vector space $M_2(\Gamma_0(19))$ is spanned by the following two modular forms:
$$
\left\{\Phi_{1,19}(z),\Delta_{2,19}(z)\right\}
$$
Now we give the expression for the corresponding theta function. 
\begin{equation*}
\begin{split}
\Theta_{5,1}(z) & = \Phi_{1,19}(z)+\frac{8}{3}\Delta_{2,19}(z) \\
& = -\frac{1}{18}E_2(z) + \frac{19}{18}E_2(19z) + \frac{8}{3}\Delta_{2,19}(z).
\end{split}
\end{equation*}

\smallskip

\noindent {\bf A basis for  the space $M_2(\Gamma_0(38))$ (the case $(a,\ell) = (5,2)$)}: In this case we need two newforms of level $38$. Explicit expression of these newforms are not known. However, using SAGE one can get their Fourier expansion (with certain number of Fourier coefficients) which 
we give below. 
\begin{equation}
\begin{split}
\Delta_{2,38;1}(z)& = q - q^2 + q^3 + q^4 - q^6 - q^7 - q^8 - 2q^9 + O(q^{10}) =  \sum_{n\ge 1} \tau_{2,38;1}(n) q^n, \\
\Delta_{2,38;2}(z)& = q + q^2 - q^3 + q^4 - 4q^5 - q^6 + 3q^7 + q^8 - 2q^9 + O(q^{10}) = \sum_{n\ge 1} \tau_{2,38;2}(n) q^n .
\end{split}
\end{equation}
A basis for the space $M_2(\Gamma_0(38))$ is given by
$$
\left\{\Phi_{1,2}(z),\Phi_{1,19}(z),\Phi_{1,5}(z),\Phi_{1,38}(z),\Delta_{2,19}(z),\Delta_{2,19}(2z),\Delta_{2,38;1}(z),\Delta_{2,38;2}(z)\right\}.
$$
In this case, the theta series has the following expression.
\begin{equation*}
\begin{split}
\Theta_{5,2}(z) & = \frac{1}{20}\Phi_{1,2}(z) - \frac{9}{10}\Phi_{1,19}(z) + \frac{37}{20}\Phi_{1,5}(z) + \frac{4}{5}\Delta_{2,38;2}(z) \\
& = -\frac{1}{20}E_2(z) + \frac{1}{10}E_2(2z) - \frac{19}{20}E_2(19z) + \frac{19}{10}E_2(38z) + \frac{4}{5}\Delta_{2,38;2}(z).
\end{split}
\end{equation*}
Proof of \thmref{thm1} is now complete.

\bigskip

\subsection{Proof of \thmref{thm2}}
We shall demonstrate the method by giving a proof of the formula for $R_{1,2;j}(n)$, $1\le j\le4$. The rest of the proofs are similar. 
It is clear that 
\begin{equation*}
R_{1,2;j}(n)  = \sum_{a, b \in {\mathbb N}_0\atop{a+bj =n}} R_{1,2}(a) R_{1,2}(b). 
\end{equation*}
Now using the formula for $R_{1,2}(n)$ from \thmref{thm1} with the convention $R_{1,2}(0) =1$, we get 
\begin{equation*}
\begin{split}
R_{1,2;j}(n) & = R_{1,2}(n) + R_{1,2}(n/j) + \sum_{a,b \in {\mathbb N}\atop {a+bj=n}} R_{1,2}(a) R_{1,2}(b)\\
& = R_{1,2}(n)  + R_{1,2}(n/j) + \sum_{a,b\in {\mathbb N}\atop{a+bj=n}} (6 \sigma(a) - 12 \sigma(a/2) + 18 \sigma(a/3) -36 \sigma(a/6)) \\
& \hskip 5cm  (6 \sigma(b) - 12 \sigma(b/2) + 18 \sigma(b/3) -36 \sigma(b/6))\\
& =  R_{1,2}(n)  + R_{1,2}(n/j) + 36 W_j(n) - 72 W_{2j}(n) + 108 W_{3j}(n) - 216 W_{6j}(n) - 72 W_{2,j}(n)\\ 
& \quad + 108  W_{3,j}(n) - 216 W_{6,j}(n) - 216 W_{2,3j}(n) - 216 W_{3,2j}(n) + 144 W_j(n/2) +324 W_j(n/3) \\
& \quad + 1296 W_j(n/6)  - 648 W_{2j}(n/3) + 432 W_{3j}(n/2) - 648 W_{2,j}(n/3)  + 432 W_{3,j}(n/2).
\end{split}
\end{equation*}
We now use the convolution sums $W_{a,b}(n)$ and $W_N(n)$ obtained by several authors (see the table below) in the last step and get the required formulas for $R_{1,2;j}(n)$ for $1\le j\le 4$. 

\bigskip

\begin{center}
\begin{tabular}{|l|l|l|c|}
\hline 
$(a,\ell;j)$ & \quad Convolution sums & \quad Convolution sums& References \\
& \qquad \quad $W_N(n)$ & \quad  \qquad  $W_{a,b}(n)$ & \\
\hline
$(1,2;1)$ & $W_N(n)$, $N=1,2,3,6$& $W_{2,3}(n)$ & \cite{{aw3}, {royer}}\\
\hline
$(1,2;2)$ & $W_N(n)$, $N=1,2,3,4,6,12$& $W_{2,3}(n)$, $W_{3,4}(n)$ & \cite{{aw3}, {okayama}, {royer}}\\
\hline
$(1,2;3)$ & $W_N(n)$, $N=1,2,3,6,9,18$& $W_{2,3}(n)$, $W_{2,9}(n)$ & \cite{{aw2}, {aw3}, {royer}}\\
\hline
$(1,2;4)$ & $W_N(n)$, $N=2,4,6,8,12,24$& $W_{2,3}(n)$, $W_{3,4}(n)$, $W_{3,8}(n)$ & \cite{{aw4}, {aw3}, {okayama}, {royer}}\\
\hline
$(1,4;1)$ & $W_N(n)$, $N=1,2,3,4,6,12$& $W_{2,3}(n)$, $W_{3,4}(n)$ & \cite{{aw3}, {okayama}, {royer}}\\
\hline
$(1,4;2)$ & $W_N(n)$, $N=1,2,4,6,8,12,24$& $W_{2,3}(n)$, $W_{3,4}(n)$, $W_{3,8}(n)$ & \cite{{aw4}, {aw3}, {okayama},{royer}}\\
\hline
$(3,2;1)$ & $W_N(n)$, $N=1,2,22$& $W_{2,11}(n)$ & \cite{{ntienjem}, {royer}}\\
\hline 
\end{tabular}
\end{center}

\smallskip

To get the formula for $R_{3,2;1}(n)$ we also need the convolution sum $W_{11}(n)$. Though this convolution sum is obtained by E. Royer in \cite{royer}, it involved a pair of terms with complex coefficients. In order to avoid this expression, we compute below the convolution sum 
$W_{11}(n)$ which involves only the rational coefficients. 

\smallskip

\noindent {\bf The convolution sum $W_{11}(n)$}:  First  we compute an explicit basis for the space $M_4(\Gamma_0(22))$. The dimension of this vector space is $11$ and the cuspidal dimension is $7$. The following $7$ eta-quotients form a basis for the space of cusp forms $S_4(\Gamma_0(22))$.

\begin{eqnarray}\label{aj}
A_{1}(z) & = & 1^{6}2^{-2} 11^{6}  22^ {-2} := \displaystyle{\sum_{n\ge 1}} a_{1}(n) q^n,\\
 A_{2}(z) & = & 1^{4}11^{4}:= \displaystyle{\sum_{n\ge 1}} a_{2}(n) q^n, \\
 A_{3}(z) &= &1^{2}  2^{2} 11^{2}  22^{2}:= \displaystyle{\sum_{n\ge 1}} a_{3}(n) q^n, 
 \end{eqnarray}
 \begin{eqnarray}
 A_{4}(z)  &=  &2^{4} 22^{4} := \displaystyle{\sum_{n\ge 1}} a_{4}(n) q^n,\\
 A_{5}(z)  &=  &1^{-2}  2^{6} 11^{-2}  22^{6} := \displaystyle{\sum_{n\ge 1}} a_{5}(n) q^n,\\
 A_{6}(z)  &= & 1^{-1}  2^{1} 11^{3}  22^5 := \displaystyle{\sum_{n\ge 1}} a_{6}(n) q^n,\\
A_{7}(z) &=&  1^{-5}  2^{9} 11^{7}  22^{-3}:= \displaystyle{\sum_{n\ge 1}} a_{7}(n) q^n.\label{aj1}
\end{eqnarray}

\smallskip

By taking a basis of the Eisenstein series for the space $M_4(\Gamma_0(22))$ as $\left\{E_4(tz) : t\vert 22\right\}$, we get the following full 
basis for $M_4(\Gamma_0(22))$.
$$
\left\{E_4(tz), A_j(z) : t\vert 22, 1\le j\le 7\right\}.
$$
In order to get the convolution sum $W_{11}(n)$, we express the modular form of weight $4$ given by $(E_2(z)-11E_2(11z))^2$ in 
terms of the above basis. So, we get the following expression.
\begin{equation*}
\begin{split}
(E_2(z)-11E_2(11z))^2 & = \frac{50}{61}E_4(z) + \frac{6050}{61}E_4(11z) + \frac{17280}{61} A_1(z) + \frac{118656}{61}A_2(z) \\
& \quad + \frac{276480}{61}A_3(z) + \frac{276480}{61}A_4(z).
\end{split}
\end{equation*}

Now, by comparing the $n$-th coefficient on both the sides, we get the expression for $W_{11}(n)$ as 
\begin{equation}\label{w11}
\begin{split}
W_{11}(n)& = \frac{5}{1464}\sigma_3(n) + \frac{605}{1464}\sigma_3(n/11) + \left(\frac{1}{21} - \frac{n}{44}\right)\sigma(n) + \left( \frac{1}{21}-\frac{n}{4}\right)\sigma(n/11)\\
& \quad -\frac{15}{671}a_1(n) - \frac{103}{671}a_2(n) - \frac{240}{671} a_3(n) -  \frac{240}{671} a_4(n).
\end{split}
\end{equation}

\smallskip

\subsection{Proof of \thmref{thm3}}
We observe that the theta series corresponding to the quadratic form given by \eqref{octo} is the following product:
$$
\Theta^i(z) \Theta^j(2z) \Theta^k(4z)\Theta^l(8z).
$$
Therefore, by Fact II of \cite{octonary-ijnt}, all of them belong to the space of modular forms of weight $4$ on $\Gamma_0(32)$ with character 
depending on the parity of $j+l$. When $j+l$ is even, then the above product of theta series belongs to $M_4(\Gamma_0(32))$ and if $j+l$ is odd, 
then it belongs to $M_4(\Gamma_0(32), \chi_8)$. Therefore, as in the proof of \thmref{thm1}, the essence of the proof lies in giving explicit bases 
for these vector spaces. 

\subsection{A basis for $M_4(\Gamma_0(32))$ and proof of \thmref{thm3}(i).}

The vector space $M_4(\Gamma_0(32))$ has dimension $16$ and the space of Eisenstein series has dimension $8$. So, 
$\dim_{\mathbb C}S_4(\Gamma_0(32)) = 8$. For $d=8$ and $16$, $S_4^{new}(\Gamma_0(d))$ is one-dimensional and 
$\dim_{\mathbb C}S_4^{new}(\Gamma_0(32)) =3$.\\
Let us define some eta-quotients and use them to give an explicit basis for $S_4(\Gamma_0(32))$.
Let
\begin{eqnarray}
&f_{4,8}(z) = 2^4  4^4 := \displaystyle{\sum_{n\ge 1}} a_{4,8}(n) q^n, \\
&f_{4,16}(z) = 2^{-4} 4^{16} 8^{-4} := \displaystyle{\sum_{n\ge 1}} a_{4,16}(n) q^n, ~
\end{eqnarray}
\begin{eqnarray}
&g_{4,32,1}(z) =  1^{-2} 2^{1} 4^{8} 8^{3} 16^{-2}:= \displaystyle{\sum_{n\ge 1}} a_{4,32,1}(n) q^n,\\
&g_{4,32,2}(z) =  1^{2} 2^{3} 8^{1} 16^{2}  := \displaystyle{\sum_{n\ge 1}} a_{4,32,2}(n) q^n,\\
&g_{4,32,3}(z) = 1^{-4} 2^{6} 4^{8} 8^{-2} := \displaystyle{\sum_{n\ge 1}} a_{4,32,3}(n) q^n,\\
&f_{4,32,1}(z) := \displaystyle{\sum_{n\equiv1(2)}} a_{4,32,1}(n) q^n,\\
&f_{4,32,2}(z) :=\displaystyle{\sum_{n\equiv1(2)}} a_{4,32,2}(n) q^n,\\
&f_{4,32,3}(z) :=\displaystyle{\sum_{n\equiv1(2)}} a_{4,32,3}(n) q^n.
\end{eqnarray}
Let $\chi_{-4}$ be the primitive odd character modulo $4$. Using the definition \eqref{eisenstein}, the Eisenstein series 
$E_{4,\chi_{-4},\chi_{-4}}(z)$ belongs to $M_4(\Gamma_0(16))$ and we have 
\begin{equation}\label{eis16} 
E_{4,\chi_{-4},\chi_{-4}}(z) = \sum_{n\ge 1} \sigma_{3,\chi_{-4},\chi_{-4}}(n) q^n = \sum_{n \ge 1} \left(\frac{-4}{n}\right) \sigma_3(n) q^n.
\end{equation}

Using the above functions, we give below a basis for the space $M_4(\Gamma_0(32))$.

\begin{prop}\label{trivial}
A basis for the space $M_4(\Gamma_0(32))$ is given by 
\begin{equation}
\begin{split}
\left\{E_4(tz), t\vert 32;E_{4,\chi_{-4},\chi_{-4}}(z),E_{4,\chi_{-4},\chi_{-4}}(2z), \right. \hskip 4.5cm &\\
 \qquad \quad \left.f_{4,8}(t_1z), t_1\vert 4;  f_{4,16}(t_2z), t_2\vert 2;  f_{4,32,1}(z), f_{4,32,2}(z), f_{4,32,3}(z) \right\}.&\\
\end{split}
\end{equation}
\end{prop}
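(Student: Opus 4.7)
The plan is to prove \propref{trivial} in three stages: confirm the total count matches $\dim_{\mathbb{C}} M_4(\Gamma_0(32))=16$, exhibit the first eight functions as a basis of the Eisenstein subspace, and exhibit the remaining eight as a basis of $S_4(\Gamma_0(32))$. Linear independence of the whole family then follows because Eisenstein and cuspidal parts intersect trivially; since $16$ independent elements of a $16$-dimensional space are a basis, the proposition follows.

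For the Eisenstein part, the standard dimension formulas (see \cite{miyake,Stein}) give $\dim \mathcal{E}_4(\Gamma_0(32))=8$. The six forms $E_4(tz)$, $t\mid 32$, lie in $M_4(\Gamma_0(32))$ by the level-raising fact recalled in \S4.1, and their leading $q$-expansions $1+O(q^t)$ show they are linearly independent (they have distinct orders of leading perturbation from the constant $1$). The two forms $E_{4,\chi_{-4},\chi_{-4}}(z)$ and $E_{4,\chi_{-4},\chi_{-4}}(2z)$ lie in $M_4(\Gamma_0(16))\subset M_4(\Gamma_0(32))$ since $\chi_{-4}\cdot\chi_{-4}=\mathbf{1}$ and the conductor product is $16$; they are cuspless because their Fourier expansions have no constant term but are genuine Eisenstein series attached to a non-principal character pair, hence sit outside the span of the $E_4(tz)$'s. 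Linear independence of the eight Eisenstein candidates can be confirmed by inspecting a handful of low Fourier coefficients and checking the resulting $8\times 8$ minor is nonsingular.

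For the cuspidal part, the newform dimensions $\dim S_4^{\mathrm{new}}(\Gamma_0(8))=\dim S_4^{\mathrm{new}}(\Gamma_0(16))=1$ and $\dim S_4^{\mathrm{new}}(\Gamma_0(32))=3$ together with Atkin--Lehner theory give $\dim S_4(\Gamma_0(32))=3\cdot 1+2\cdot 1+1\cdot 3=8$. The eta-quotients $f_{4,8}(z)$, $f_{4,16}(z)$ and $g_{4,32,i}(z)$ ($i=1,2,3$) are shown to lie in $S_4$ of the indicated level by verifying the Ligozat conditions: integrality of the weight, the two congruences on $\sum r_i d_i$ and $\sum r_i (N/d_i)$ modulo $24$, and positivity of the cuspidal orders at every cusp of $\Gamma_0(\mathrm{lcm}\,d_i)$. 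The forms $f_{4,8}(t_1 z)$ with $t_1\mid 4$ and $f_{4,16}(t_2 z)$ with $t_2\mid 2$ then remain cuspidal in $S_4(\Gamma_0(32))$ by the level-raising fact.

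The subtle point, and in my view the main obstacle, is to show that the three ``odd parts'' $f_{4,32,i}(z)=\sum_{n\;\mathrm{odd}} a_{4,32,i}(n) q^n$ genuinely define modular forms on $\Gamma_0(32)$. For this I would use the identity
\begin{equation*}
f_{4,32,i}(z) \;=\; g_{4,32,i}(z) - (V_2\,U_2)\, g_{4,32,i}(z),
\end{equation*}
where $U_2$ and $V_2$ are the standard Hecke-type operators; since $g_{4,32,i}\in S_4(\Gamma_0(32))$ and both $U_2,V_2$ preserve $S_4(\Gamma_0(32))$ (because $2\mid 32$), the odd part lies in $S_4(\Gamma_0(32))$. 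Once all eight cusp form candidates are placed in $S_4(\Gamma_0(32))$, linear independence is verified by computing their Fourier coefficients up to the Sturm bound $\lfloor k[\mathrm{SL}_2(\mathbb{Z}):\Gamma_0(32)]/12\rfloor=16$ and checking the resulting $8\times 16$ coefficient matrix has rank $8$. Since $\dim S_4(\Gamma_0(32))=8$, they form a basis of the cusp form space, completing the proof.
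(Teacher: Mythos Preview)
The paper does not actually prove this proposition; it is simply asserted, with the dimension count and the specific functions presumably verified computationally (SAGE is acknowledged). Your argument therefore supplies more than the paper does, and your overall structure---splitting into the Eisenstein and cuspidal subspaces, invoking the Atkin--Lehner newform decomposition to account for the cuspidal dimension, checking Ligozat's conditions for the eta-quotients, and verifying linear independence up to the Sturm bound---is sound and standard.

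There is one imprecision worth correcting. In your treatment of the odd-sieved forms $f_{4,32,i}$ you assert that ``both $U_2,V_2$ preserve $S_4(\Gamma_0(32))$ (because $2\mid 32$)''. This is false for $V_2$: in general $V_2$ sends $S_k(\Gamma_0(N))$ into $S_k(\Gamma_0(2N))$, not $S_k(\Gamma_0(N))$. The composite $V_2U_2$ \emph{does} preserve $S_4(\Gamma_0(32))$, but the relevant hypothesis is $4\mid 32$, not merely $2\mid 32$: when $p^2\mid N$ one has $U_p\colon S_k(\Gamma_0(N))\to S_k(\Gamma_0(N/p))$, after which $V_p$ restores the level to $N$. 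Equivalently, one can argue directly that $g(z+\tfrac{1}{2})$ remains in $S_4(\Gamma_0(32))$ because the translation $z\mapsto z+\tfrac{1}{2}$ normalises $\Gamma_0(N)$ as soon as $4\mid N$, so the odd part $\tfrac{1}{2}\bigl(g(z)-g(z+\tfrac{1}{2})\bigr)$ stays at level $32$. With this adjustment your proof is complete.
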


\smallskip

For the sake of simplicity in the formulae, we list these basis elements as $\{F_\alpha(z)\vert 1\le\alpha\le16\}$, where  $F_1(z) = E_4(z)$, $ F_2(z) = E_4(2z) $, $ F_3(z) = E_4(4z)$,
$ F_4(z) = E_4(8z)$, $F_5(z) = E_4(16z)$, $ F_6(z) = E_4(32z)$, $ F_7(z) = E_{4,\chi_{-4},\chi_{-4}}(z)$, $ F_{8}(z) =E_{4,\chi_{-4},\chi_{-4}}(2z)$, $ F_{9}(z) = f_{4,8}(z)$, $F_{10}(z) = f_{4,8}(2z)$, $F_{11}(z) =  f_{4,8}(4z)$, $F_{12}(z) = f_{4,16}(z)$,  $F_{13}(z) = f_{4,16}(2z)$, $F_{14}(z) =  f_{4,32,1}(z)$, $F_{15}(z) =  f_{4,32,2}(z)$, $F_{16}(z) =  f_{4,32,3}(z)$. 

\noindent We also express the Fourier coefficients of the function $F_\alpha(z)= \sum_{n\ge 1} C_\alpha(n) q^n$, $1\le \alpha\le 16$. 

We are now ready to prove the theorem. Noting that all the 40 cases (corresponding to Type I in Table 2) have the property that the sum of the powers of the theta functions corresponding to the coefficients $2$ and $8$ are even. So, we can express these theta functions 
as a linear combination of the basis given in \propref{trivial} as follows. 
\begin{equation}
\Theta^i(z) \Theta^j(2z)\Theta^k(4z)\Theta^l(8z) = \sum_{\alpha=1}^{16} c_\alpha F_\alpha(z),
\end{equation}
where $a_\alpha$'s some constants. Comparing the $n$-th Fourier coefficients both the sides, we get 
\begin{equation*}
N(1^i,2^j,4^k,8^l;n) = \sum_{\alpha=1}^{16} c_\alpha C_\alpha(n).
\end{equation*}
Explicit values for the constants \!$c_\alpha$, \!$1\le\alpha\le 16$ corresponding to these 40 cases are given in Table \!3. 

\subsection{A basis for $M_4(\Gamma_0(32), \chi_8)$ and proof of \thmref{thm3}(ii)} 

The space $M_4(\Gamma_0(32), \chi_8)$ is $16$ dimensional and the cusp forms space has dimension $8$. 
For the space of Eisenstein series we use the basis elements given by \eqref{eisenstein}.  There are two Eisenstein series corresponding to 
$(\chi,\psi) = ({\bf 1}, \chi_8)$ and $(\chi,\psi) = (\chi_8, {\bf 1})$, where  $\chi_8 = \left(\frac{2}{\cdot}\right)$, the even primitive character modulo $8$. 
For the space of cusp forms, we use the following  two newforms of level $8$.
\begin{equation}
\begin{split}\label{new:chi8}
f_{4,8,\chi_8;1}(z) &=  1^{-2} 2^{11} 4^{-3}8^2:=  \sum_{n\ge 1} a_{4,8,\chi_8;1}(n) q^n ,~~\\
f_{4,8,\chi_8;2}(z) & =  1^2 2^{-3} 4^{11} 8^{-2}:= \sum_{n\ge 1} a_{4,8,\chi_8;2}(n) q^n,~~\\
\end{split}
\end{equation}
We also need the $2$ newforms of level $32$, which we define below. 
Let 
\begin{equation}
\begin{split}
g_{4,32,\chi_8;1}(z) & =  1^{2} 2^1 4^5 := \sum_{n\ge 1} a_{4,32,\chi_8;1}(n) q^n , ~~\\
g_{4,32,\chi_8;2}(z) & = 1^{-2} 2^3 4^3 8^4 := \sum_{n\ge 1} a_{4,32,\chi_8;2}(n) q^n.
\end{split}
\end{equation}
Then the two newforms of level $32$ are defined by 
\begin{equation}\label{32:chi8}
\begin{split}
f_{4,32,\chi_8;1}(z) & := \sum_{n\ge 1} \chi_4(n)  a_{4,32,\chi_8;1}(n) q^n, ~~\\
f_{4,32,\chi_8;2}(z) & := \sum_{n\ge 1} \chi_4(n) a_{4,32,\chi_8;2}(n) q^n, ~~\\
\end{split}
\end{equation}
where $\chi_4$ is the trivial character modulo $4$.

A basis for the space $M_4(\Gamma_0(32),\chi_{8})$ is given in the following proposition. 

\begin{prop}\label{chi8} 
A basis for the space $M_4(\Gamma_0(32),\chi_8)$ is given by 
\begin{equation}
\begin{split}
&\left\{E_{4, {\bf 1},\chi_8}(tz), E_{4,\chi_8, {\bf 1}}(tz), t\vert 4; E_{4,\chi_{-4},\chi_{-8}}(z),E_{4,\chi_{-8},\chi_{-4}}(z)\right.\\
&\left.f_{4,8,\chi_8;1}(t_1z), f_{4,8,\chi_8;2}(t_1z), t_1\vert 4;  f_{4,32,\chi_8;1}(z),f_{4,32,\chi_8;2}(z) \right\}.\\
\end{split}
\end{equation}
In the above, $E_{4,{\bf 1},\chi_8}(z)$ and $E_{4,\chi_8,{\bf 1}}(z)$ are defined as in \eqref{eisenstein}, $f_{4,8,\chi_8;i}(z)$, $i=1,2$ are defined in \eqref{new:chi8} and  $f_{4,32,\chi_8;j}(z)$, $1\le j\le 2$ are defined by \eqref{32:chi8}
\end{prop}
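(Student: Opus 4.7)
The approach is a dimension count: since $\dim_{\mathbb{C}} M_4(\Gamma_0(32),\chi_8)=16$ matches the size of the candidate set, it suffices to prove (i) each listed function lies in $M_4(\Gamma_0(32),\chi_8)$ and (ii) the sixteen functions are linearly independent over $\mathbb{C}$.

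For step (i), the eight Eisenstein candidates are handled directly by formula \eqref{eisenstein}. In each case I would verify the parity condition $\chi(-1)\psi(-1)=(-1)^4=1$, check that the product of conductors divides $32$, and compute the quotient character $\chi\psi^{-1}$ to confirm it equals $\chi_8$. For example, $\chi_{-4}\chi_{-8}^{-1}=\chi_{-4}\chi_{-8}=\chi_8$, since both factors are odd order-two characters of different conductors, so $E_{4,\chi_{-4},\chi_{-8}}$ lies in $M_4(\Gamma_0(32),\chi_8)$; the scalings by $t\mid 4$ then handle the translates $E_{4,\mathbf{1},\chi_8}(tz)$ and $E_{4,\chi_8,\mathbf{1}}(tz)$, which start at level $8$. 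For the cusp-form candidates, I would apply the Ligozat/Ono eta-quotient criterion to each of $f_{4,8,\chi_8;i}$ and $g_{4,32,\chi_8;j}$, checking the divisibility conditions $\sum d_i r_i\equiv 0\equiv \sum(N/d_i)r_i\pmod{24}$, the square-test on $(-1)^k\prod d_i^{r_i}$ for the character, and non-negativity of the cusp orders. The translates $f_{4,8,\chi_8;i}(t_1 z)$ with $t_1\mid 4$ then sit in $S_4(\Gamma_0(32),\chi_8)$ as oldforms. Finally, the forms $f_{4,32,\chi_8;j}$ are the odd-Fourier-index restrictions of $g_{4,32,\chi_8;j}$, equivalently the twists by the principal character modulo $4$; using the identity $f_{\rm odd}=f-V_2 U_2 f$ together with the fact that $U_2$ and $V_2$ preserve $\Gamma_0(32)$ when $2\mid 32$, one confirms that the result remains in $S_4(\Gamma_0(32),\chi_8)$.

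For step (ii), I would compute the $q$-expansions of all sixteen candidates to a precision exceeding the Sturm bound for $M_4(\Gamma_0(32),\chi_8)$, assemble them into a matrix, and verify rank $16$ by a SAGE determinant calculation. A structural alternative separates Eisenstein from cuspidal contributions: the Eisenstein candidates are linearly independent because they come from distinct primitive character pairs $(\chi,\psi)$ or differ in the scaling $t$, while the cuspidal candidates are independent via newform theory at levels $8$ and $32$ together with standard oldform lifts.

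The principal obstacle is step (i) for the twisted forms $f_{4,32,\chi_8;j}$: the naive level bound under a twist by a character of modulus $4$ would place the sieved form in $\Gamma_0(128)$ rather than $\Gamma_0(32)$. One has to exploit the fact that $2\mid 32$ already to a high power, together with the behaviour of $U_2$ on $g_{4,32,\chi_8;j}$, to see that the level does not inflate. Once this is settled, the rest of the argument is routine bookkeeping.
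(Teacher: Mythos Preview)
Your approach is correct and in fact considerably more explicit than the paper's own treatment: the paper does not prove this proposition. It simply records that $\dim_{\mathbb C} M_4(\Gamma_0(32),\chi_8)=16$ with cuspidal dimension $8$, defines the sixteen candidates, and states the result, leaving verification implicit (presumably to the SAGE computations mentioned in the acknowledgements). Your dimension-count strategy---membership via the Eisenstein classification \eqref{eisenstein} and the Ligozat/Ono eta-quotient criterion, followed by linear independence via a Sturm-bound rank check or newform theory---is the standard way to make such a basis claim rigorous, and it supplies precisely what the paper omits.

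The level-inflation issue you flag for $f_{4,32,\chi_8;j}$ is the only non-routine point, and the paper sidesteps it entirely. Your identity $f_{\mathrm{odd}}=f-V_2U_2f$ is correct, but note that $V_2$ a priori sends level $32$ to level $64$, so one more ingredient is needed: either show that $U_2 g_{4,32,\chi_8;j}$ already descends to level $16$ (so that $V_2$ lands back in level $32$), or---more in the spirit of the paper---simply verify to Sturm precision that the sieved $q$-expansion agrees with an element of $S_4^{\mathrm{new}}(\Gamma_0(32),\chi_8)$. Either route closes the gap, after which the remainder of your argument is routine.
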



For the sake of simplifying of the notation, we shall list the basis in \propref{chi8} as \\
$\left\{G_\alpha(z)\vert 1\le \alpha\le 16\right\}$, where 
$G_1(z) = E_{4, {\bf 1},\chi_8}(z)$, $G_2(z) = E_{4, {\bf 1},\chi_8}(2z)$,
$G_3(z) = E_{4, {\bf 1},\chi_8}(4z)$, $G_4(z) = E_{4,\chi_8, {\bf 1}}(z)$,  $G_5(z) = E_{4,\chi_8, {\bf 1}}(2z)$,  
$G_6(z) = E_{4,\chi_8, {\bf 1}}(4z)$, $G_7(z) = E_{4,\chi_{-4},\chi_{-8}}(z)$,  $G_8(z) =E_{4,\chi_{-8},\chi_{-4}}(z)$, 
$G_9(z) = f_{4,8,\chi_8;1}(z)$, $G_{10}(z) = f_{4,8,\chi_8;1}(2z)$, $G_{11}(z) = f_{4,8,\chi_8;1}(4z)$, $G_{12}(z) = f_{4,8,\chi_8;2}(z)$, $G_{13}(z) = f_{4,8,\chi_8;2}(2z)$, $G_{14}(z) = f_{4,8,\chi_8;2}(4z)$, $G_{15}(z) = f_{4,32,\chi_8;1}(z)$, $ G_{16}(z) = f_{4,32,\chi_8;2}(z)$.

\noindent 
As before, we also write the Fourier expansions of these basis elements as $G_\alpha(z) = \sum_{n\ge 1} D_\alpha(n) q^n$, $1\le \alpha\le 16$.

In this case, all the 44 quadruples (corresponding to Type II in Table 2) have the property that the sum of the powers of the theta functions corresponding to the coefficients $2$ and $8$ are odd. Therefore, the resulting products of theta functions are modular forms of weight $4$ on $\Gamma_0(32)$ with character $\chi_8$ (as observed earlier). So, we can express these products of theta functions as a linear combination of the basis given in \propref{chi8} as follows. 
\begin{equation}
\Theta^i(z) \Theta^j(2z)\Theta^k(3z)\Theta^l(4z) = \sum_{\alpha=1}^{16} d_\alpha G_\alpha(z).
\end{equation}
Comparing the $n$-th Fourier coefficients both the sides, we get 
\begin{equation*}
N(1^i,2^j,4^k,8^l;n) = \sum_{\alpha=1}^{16} d_\alpha D_\alpha(n).
\end{equation*}
Explicit values for the constants $d_\alpha$, $1\le \alpha\le 14$ corresponding to these 44 cases are given in Table 4.



\section{List of tables}
In this section, we list Tables 3 and 4, which give the coefficients for the formulas for the number of representations corresponding to \thmref{thm3} 
(i)  and (ii).

\vfill

\begin{center}
\textbf{Table 3.} (Theorem 2.3 (i))
\begin{tabular}{|c|c|c|c|c|c|c|c|c|c|c|c|c|c|c|c|c|}
\hline
&&&&&&&&&&&&&&&&\\
$ijkl$&$c_1$ &$c_2$& $c_3$& $c_4$&$c_5$ &$c_6$& $c_7$& $c_8$&$c_9$ &$c_{10}$& $c_{11}$& $c_{12}$&$c_{13}$ &$c_{14}$& $c_{15}$& $c_{16}$ \\
\hline 
&&&&&&&&&&&&&&&&\\
$1016$&$\frac{1}{15360}$& $\frac{-3}{5120}$& $\frac{17}{1920}$& $\frac{-1}{120}$& $\frac{-1}{15}$& $\frac{16}{15}$& $\frac{1}{64}$&0&$\frac{31}{64}$&0&2&$\frac{31}{64}$&0&$\frac
{13}{8}$& $\frac{3}{4}$&$\frac{-5}{8}$\\
&&&&&&&&&&&&&&&&\\
$1034$&$\frac{1}{7680}$& $\frac{-3}{2560}$& $\frac{17}{960}$& $\frac{-1}{60}$& $\frac{-1}{15}$& $\frac{16}{15}$& $\frac{1}{32}$&0&$\frac{15}{32}$&0&4& $\frac{15}{32}$&0&$\frac{3}{2}$&1&$\frac{-1}{2}$\\
&&&&&&&&&&&&&&&&\\
$1052$&$\frac{1}{3840}$& $\frac{-3}{1280}$& $\frac{17}{480}$& $\frac{-1}{30}$& $\frac{-1}{15}$& $\frac{16}{15}$& $\frac{1}{16}$&0&$\frac{7}{16}$&0&4&$\frac{7}{16}$& 0 &$\frac{3}{2}$& 1 &$\frac{-1}{2}$\\
&&&&&&&&&&&&&&&&\\
$1115$&$\frac{1}{7680}$& $\frac{-1}{7680}$&0&0 &$\frac{-1}{15}$& $\frac{16}{15}$&0&0 &$\frac{11}{32}$& $\frac{3}{4}$&2& $\frac{5}{8}$& 1& $\frac{11}{8}$&
$\frac{1}{4}$& $\frac{-3}{8}$\\
&&&&&&&&&&&&&&&&\\

$1133$&$\frac{1}{3840}$& $\frac{-1}{3840}$&0&0 &$\frac{-1}{15}$& $\frac{16}{15}$&0&0 &$\frac{7}{16}$& $\frac{1}{2}$&4&  $\frac{1}{2}$& 1&  $\frac{5}{4}$&
$\frac{1}{2}$&  $\frac{-1}{4}$\\

&&&&&&&&&&&&&&&&\\
$1151$&$\frac{1}{1920}$& $\frac{-1}{1920}$&0&0 &$\frac{-1}{15}$& $\frac{16}{15}$&0&0 &$\frac{3}{8}$&1&4&  $\frac{1}{2}$& 0&  $\frac{3}{2}$&1&  $\frac{-1}{2}$\\

&&&&&&&&&&&&&&&&\\
$1214$&$\frac{1}{3840}$& $\frac{-1}{3840}$&0&0 &$\frac{-1}{15}$& $\frac{16}{15}$&0&0 &$\frac{3}{16}$& $\frac{3}{2}$&4&  $\frac{3}{4}$& 2&1&0&0\\
&&&&&&&&&&&&&&&&\\

$1232$&$\frac{1}{1920}$& $\frac{-1}{1920}$&0&0 &$\frac{-1}{15}$& $\frac{16}{15}$&0&0 &$\frac{3}{8}$&1&4&  $\frac{1}{2}$& 2&1&0&0\\

&&&&&&&&&&&&&&&&\\
$1313$&$\frac{1}{1920}$& $\frac{-1}{1920}$&0&0 &$\frac{-1}{15}$& $\frac{16}{15}$&0&0 &$\frac{1}{8}$&2&8&  $\frac{3}{4}$& 3&  $\frac{1}{2}$&0&  $\frac{1}{2}$\\

&&&&&&&&&&&&&&&&\\
$1331$&$\frac{1}{960}$& $\frac{-1}{960}$&0&0 &$\frac{-1}{15}$& $\frac{16}{15}$&0&0 &$\frac{1}{4}$&2&4&  $\frac{1}{2}$& 2&1&0&0\\

&&&&&&&&&&&&&&&&\\
$1412$&$\frac{1}{960}$& $\frac{-1}{960}$&0&0 &$\frac{-1}{15}$& $\frac{16}{15}$&0&0 &$\frac{1}{4}$&2&12&  $\frac{1}{2}$& 4&0&0&1\\
&&&&&&&&&&&&&&&&\\
$1511$&$\frac{1}{480}$& $\frac{-1}{480}$&0&0 &$\frac{-1}{15}$& $\frac{16}{15}$&0&0 &$\frac{1}{2}$&2&12&0&4&0&0&1\\
&&&&&&&&&&&&&&&&\\

$2006$&$\frac{1}{7680}$& $\frac{-1}{7680}$&0&0 &$\frac{-1}{15}$& $\frac{16}{15}$& $\frac{1}{32}$& $\frac{1}{4}$& $\frac{31}{32}$& $\frac{7}{4}$&2&  $\frac{31}{32}$&  $\frac{7}{4}$&  $\frac{13}{4}$& $\frac{3}{2}$&  $\frac{-5}{4}$\\
&&&&&&&&&&&&&&&&\\
$2024$&$\frac{1}{3840}$& $\frac{-1}{3840}$&0&0 &$\frac{-1}{15}$& $\frac{16}{15}$& $\frac{1}{16}$&0&$\frac{15}{16}$& $\frac{3}{2}$&4&  $\frac{15}{16}$& 2&3&2&-1\\
&&&&&&&&&&&&&&&&\\
$2042$&$\frac{1}{1920}$& $\frac{-1}{1920}$&0&0 &$\frac{-1}{15}$& $\frac{16}{15}$& $\frac{1}{8}$&0&$\frac{7}{8}$&1&4&  $\frac{7}{8}$& 2&3&2&-1\\
&&&&&&&&&&&&&&&&\\
$2105$&$\frac{1}{3840}$& $\frac{-1}{3840}$& $\frac{1}{120}$& $\frac{-17}{120}$& $\frac{1}{15}$& $\frac{16}{15}$&0&  $\frac{1}{4}$& $\frac{11}{16}$&  $\frac{5}{2}$&6&  $\frac{5}{4}$&  $\frac{11}{4}$&  $\frac{11}{4}$&  $\frac{1}{2}$&  $\frac{-3}{4}$\\
&&&&&&&&&&&&&&&&\\
$2123$&$\frac{1}{1920}$& $\frac{-1}{1920}$&0&0 &$\frac{-1}{15}$& $\frac{16}{15}$&0&0 &$\frac{7}{8}$&2&8&1&3&$\frac{5}{2}$&1&  $\frac{-1}{2}$\\
&&&&&&&&&&&&&&&&\\
$2141$&$\frac{1}{960}$& $\frac{-1}{960}$&0&0 &$\frac{-1}{15}$& $\frac{16}{15}$&0&0 &$\frac{3}{4}$&2&4&1&2&3&2&-1\\
&&&&&&&&&&&&&&&&\\
$2204$&$\frac{1}{1920}$& $\frac{-1}{1920}$& $\frac{1}{60}$& $\frac{-17}{60}$& $\frac{1}{5}$& $\frac{16}{15}$&0&0 &$\frac{3}{8}$&3&12&  $\frac{3}{2}$& 4&2&0&0\\
&&&&&&&&&&&&&&&&\\
$2222$&$\frac{1}{960}$& $\frac{-1}{960}$&0&0 &$\frac{-1}{15}$& $\frac{16}{15}$&0&0 &$\frac{3}{4}$&2&12&1&4&2&0&0\\
&&&&&&&&&&&&&&&&\\
\hline
\end{tabular}
\end{center}

\newpage
\begin{center}
\textbf{Table 3.} (Theorem 2.3 (i))(contd.)
\begin{tabular}{|c|c|c|c|c|c|c|c|c|c|c|c|c|c|c|c|c|}
\hline
&&&&&&&&&&&&&&&&\\
$ijkl$&$c_1$ &$c_2$& $c_3$& $c_4$&$c_5$ &$c_6$& $c_7$& $c_8$&$c_9$ &$c_{10}$& $c_{11}$& $c_{12}$&$c_{13}$ &$c_{14}$& $c_{15}$& $c_{16}$ \\
\hline 
&&&&&&&&&&&&&&&&\\
$2303$&$\frac{1}{960}$& $\frac{-1}{960}$& $\frac{1}{60}$& $\frac{-17}{60}$& $\frac{1}{5}$& $\frac{16}{15}$&0&  $\frac{-1}{2}$& $\frac{1}{4}$&3&20&  $\frac{3}{2}$&  $\frac{11}{2}$&1&0&1\\
&&&&&&&&&&&&&&&&\\
$2321$&$\frac{1}{480}$& $\frac{-1}{480}$&0&0 &$\frac{-1}{15}$& $\frac{16}{15}$&0&0 &$\frac{1}{2}$&2&12&1&4&2&0&0\\
&&&&&&&&&&&&&&&&\\
$2402$&$\frac{1}{480}$& $\frac{-1}{480}$&0&0 &$\frac{-1}{15}$& $\frac{16}{15}$&0&-1&  $\frac{1}{2}$&2&28&1&7&0&0&2\\
&&&&&&&&&&&&&&&&\\
$2501$&$\frac{1}{240}$& $\frac{-1}{240}$& $\frac{-1}{30}$& $\frac{17}{30}$& $\frac{-3}{5}$& $\frac{16}{15}$&0&-1&1&0&28&0&7&0&0&2\\
&&&&&&&&&&&&&&&&\\
$3014$&$\frac{1}{1920}$& $\frac{1}{640}$& $\frac{-17}{480}$& $\frac{1}{30}$& $\frac{-1}{15}$& $\frac{16}{15}$& $\frac{1}{16}$&0&$\frac{9}{8}$& $\frac{9}{2}$&4&  $\frac{27}{16}$&6&4&2&-1\\
&&&&&&&&&&&&&&&&\\
$3032$&$\frac{1}{960}$& $\frac{1}{320}$& $\frac{-17}{240}$& $\frac{1}{15}$& $\frac{-1}{15}$& $\frac{16}{15}$& $\frac{1}{8}$&0&$\frac{5}{4}$&3&4&  $\frac{11}{8}$&6&4&2&-1\\
&&&&&&&&&&&&&&&&\\
$3113$&$\frac{1}{960}$& $\frac{-1}{960}$&0&0 &$\frac{-1}{15}$& $\frac{16}{15}$&0&0&1&5&16&  $\frac{7}{4}$&7&3&1&0\\
&&&&&&&&&&&&&&&&\\
$3131$&$\frac{1}{480}$& $\frac{-1}{480}$&0&0 &$\frac{-1}{15}$& $\frac{16}{15}$&0&0&1&4&4&  $\frac{3}{2}$&6&4&2&-1\\
&&&&&&&&&&&&&&&&\\
$3212$&$\frac{1}{480}$& $\frac{-1}{480}$&0&0 &$\frac{-1}{15}$& $\frac{16}{15}$&0&0&1&4&28&  $\frac{3}{2}$&8&2&0&1\\
&&&&&&&&&&&&&&&&\\
$3311$&$\frac{1}{240}$& $\frac{-1}{240}$&0&0 &$\frac{-1}{15}$& $\frac{16}{15}$&0&0&1&2&28&1&8&2&0&1\\
&&&&&&&&&&&&&&&&\\
$4004$&$\frac{1}{960}$& $\frac{1}{320}$& $\frac{-13}{240}$& $\frac{-13}{60}$& $\frac{1}{5}$& $\frac{16}{15}$&0&0 &$\frac{3}{4}$&9&12&3&12&4&0&0\\
&&&&&&&&&&&&&&&&\\
$4022$&$\frac{1}{480}$& $\frac{1}{160}$& $\frac{-17}{120}$& $\frac{2}{15}$& $\frac{-1}{15}$& $\frac{16}{15}$&0&0 &$\frac{3}{2}$&6&12&2&12&4&0&0\\
&&&&&&&&&&&&&&&&\\
$4103$&$\frac{1}{480}$& $\frac{-1}{480}$& $\frac{1}{60}$& $\frac{-17}{60}$& $\frac{1}{5}$& $\frac{16}{15}$& 0&$\frac{-1}{2}$& $\frac{1}{2}$&9&36&3&$\frac{27}{2}$&2&0&2\\
&&&&&&&&&&&&&&&&\\
$4121$&$\frac{1}{240}$& $\frac{-1}{240}$&0&0 &$\frac{-1}{15}$& $\frac{16}{15}$&0&0&1&6&12&2&12&4&0&0\\
&&&&&&&&&&&&&&&&\\
$4202$&$\frac{1}{240}$& $\frac{-1}{240}$&0&0 &$\frac{-1}{15}$& $\frac{16}{15}$&0&-1&1&6&60&2&15&0&0&4\\
&&&&&&&&&&&&&&&&\\
$4301$&$\frac{1}{120}$& $\frac{-1}{120}$& $\frac{-1}{30}$& $\frac{17}{30}$& $\frac{-3}{5}$& $\frac{16}{15}$&0&-1&2&0&60&0&15&0&0&4\\
&&&&&&&&&&&&&&&&\\
$5012$&$\frac{1}{240}$& $\frac{1}{240}$& $\frac{-17}{120}$& $\frac{2}{15}$& $\frac{-1}{15}$& $\frac{16}{15}$& $\frac{-1}{4}$&0&$\frac{3}{2}$&10&44&$\frac{11}{4}$&20&2&-4&3\\
&&&&&&&&&&&&&&&&\\
$5111$&$\frac{1}{120}$& $\frac{-1}{120}$&0&0 &$\frac{-1}{15}$& $\frac{16}{15}$&0&0&1&6&44&2&20&2&-4&3\\
&&&&&&&&&&&&&&&&\\
$6002$&$\frac{1}{120}$& $\frac{-1}{120}$&0&0 &$\frac{-1}{15}$& $\frac{16}{15}$& $\frac{-1}{2}$&-1&1&14&124&$\frac{7}{2}$&31&-4&-8&10\\
&&&&&&&&&&&&&&&&\\
$6101$&$\frac{1}{60}$& $\frac{-1}{60}$& $\frac{-1}{30}$& $\frac{17}{30}$& $\frac{-3}{5}$& $\frac{16}{15}$&0&-1&2& 0&124&0&31&-4&-8&10\\
&&&&&&&&&&&&&&&&\\
\hline
\end{tabular}
\end{center}

\newpage
\begin{center}
\textbf{Table 4.} (Theorem 2.3 (ii))
\begin{tabular}{|c|c|c|c|c|c|c|c|c|c|c|c|c|c|c|c|c|}
\hline
&&&&&&&&&&&&&&&&\\
$ijkl$&$d_1$ &$d_2$& $d_3$& $d_4$&$d_5$ &$d_6$& $d_7$& $d_8$&$d_9$ &$d_{10}$& $d_{11}$& $d_{12}$&$d_{13}$ &$d_{14}$& $d_{15}$& $d_{16}$  \\
\hline 
&&&&&&&&&&&&&&&&\\
$1007$&$\frac{1}{88}$& $\frac{-1}{88}$& $\frac{2}{11}$& $\frac{1}{88}$& $\frac{-1}{11}$& $\frac{16}{11}$& $\frac{1}{88}$& $\frac{1}{88}$& $\frac{43}{176}$& $\frac{43}{22}$& $\frac{8}{11}$& $\frac{129}{176}$& $\frac{-43}{44}$& $\frac{-4}{11}$& $\frac{43}{44}$& $\frac{43}{44}$\\
&&&&&&&&&&&&&&&&\\
$1025$&0&0& $\frac{2}{11}$& $\frac{1}{44}$& $\frac{-2}{11}$& $\frac{32}{11}$&0& $\frac{1}{44}$& $\frac{7}{22}$& $\frac{43}{22}$& $\frac{12}{11}$& $\frac{29}{44}$& $\frac{-14}{11}$& $\frac{20}{11}$& $\frac{43}{44}$& $\frac{14}{11}$\\
&&&&&&&&&&&&&&&&\\
$1043$&0&0& $\frac{2}{11}$& $\frac{1}{22}$& $\frac{-4}{11}$& $\frac{64}{11}$&0& $\frac{1}{22}$& $\frac{17}{44}$& $\frac{21}{11}$& $\frac{20}{11}$& $\frac{25}{44}$& $\frac{-17}{11}$& $\frac{24}{11}$& $\frac{21}{22}$& $\frac{17}{11}$\\
&&&&&&&&&&&&&&&&\\
$1061$&0&0& $\frac{2}{11}$& $\frac{1}{11}$& $\frac{-8}{11}$& $\frac{128}{11}$&0& $\frac{1}{11}$& $\frac{3}{11}$& $\frac{20}{11}$& $\frac{-8}{11}$& $\frac{7}{11}$& $\frac{-12}{11}$& $\frac{32}{11}$& $\frac{10}{11}$& $\frac{12}{11}$\\
&&&&&&&&&&&&&&&&\\
$1106$&0&0& $\frac{2}{11}$& $\frac{1}{44}$&0&0& $\frac{1}{44}$&0& $\frac{3}{44}$& $\frac{5}{2}$& $\frac{48}{11}$& $\frac{10}{11}$&1& $\frac{-28}{11}$& $\frac{43}{44}$& $\frac{37}{22}$\\
&&&&&&&&&&&&&&&&\\
$1124$&0&0& $\frac{2}{11}$& $\frac{1}{22}$&0&0&0&0& $\frac{3}{22}$&2& $\frac{48}{11}$& $\frac{9}{11}$&1& $\frac{16}{11}$&1&2\\
&&&&&&&&&&&&&&&&\\
$1142$&0&0& $\frac{2}{11}$& $\frac{1}{11}$&0&0&0&0& $\frac{3}{11}$&2& $\frac{48}{11}$& $\frac{7}{11}$&0& $\frac{16}{11}$&1&2\\
&&&&&&&&&&&&&&&&\\
$1205$&$\frac{-1}{44}$& $\frac{1}{44}$& $\frac{2}{11}$& $\frac{1}{22}$&0&0& $\frac{1}{44}$&0& $\frac{-3}{88}$& $\frac{67}{22}$& $\frac{92}{11}$& $\frac{89}{88}$& $\frac{59}{22}$& $\frac{-28}{11}$& $\frac{43}{44}$& $\frac{59}{22}$\\
&&&&&&&&&&&&&&&&\\
$1223$&0&0& $\frac{2}{11}$& $\frac{1}{11}$&0&0&0&0& $\frac{1}{44}$&2& $\frac{92}{11}$& $\frac{39}{44}$&3& $\frac{16}{11}$&1&3\\
&&&&&&&&&&&&&&&&\\
$1241$&0&0& $\frac{2}{11}$& $\frac{2}{11}$&0&0&0&0& $\frac{1}{22}$&2& $\frac{48}{11}$& $\frac{17}{22}$&2& $\frac{16}{11}$&1&2\\
&&&&&&&&&&&&&&&&\\
$1304$&$\frac{-1}{22}$& $\frac{1}{22}$& $\frac{2}{11}$& $\frac{1}{11}$&0&0&0&0& $\frac{-3}{44}$& $\frac{34}{11}$& $\frac{136}{11}$& $\frac{45}{44}$& $\frac{48}{11}$& $\frac{16}{11}$&1&4\\
&&&&&&&&&&&&&&&&\\
$1322$&0&0& $\frac{2}{11}$& $\frac{2}{11}$&0&0&0&0& $\frac{1}{22}$&2& $\frac{136}{11}$& $\frac{17}{22}$&4& $\frac{16}{11}$&1&4\\
&&&&&&&&&&&&&&&&\\
$1403$&$\frac{-1}{22}$& $\frac{1}{22}$& $\frac{2}{11}$& $\frac{2}{11}$&0&0& $\frac{-1}{22}$&0& $\frac{-1}{22}$& $\frac{23}{11}$& $\frac{180}{11}$& $\frac{10}{11}$& $\frac{70}{11}$& $\frac{104}{11}$& $\frac{23}{22}$& $\frac{62}{11}$\\
&&&&&&&&&&&&&&&&\\
$1421$&0&0& $\frac{2}{11}$& $\frac{4}{11}$&0&0&0&0& $\frac{1}{11}$&2& $\frac{136}{11}$& $\frac{6}{11}$&4& $\frac{16}{11}$&1&4\\
&&&&&&&&&&&&&&&&\\
$1502$&0&0& $\frac{2}{11}$& $\frac{4}{11}$&0&0& $\frac{-1}{11}$&0& $\frac{1}{11}$&0& $\frac{224}{11}$& $\frac{6}{11}$&8& $\frac{192}{11}$& $\frac{12}{11}$& $\frac{80}{11}$\\
&&&&&&&&&&&&&&&&\\
$1601$&$\frac{1}{11}$& $\frac{-1}{11}$& $\frac{2}{11}$& $\frac{8}{11}$&0&0& $\frac{-1}{11}$&0& $\frac{4}{11}$& $\frac{-24}{11}$& $\frac{224}{11}$& $\frac{-2}{11}$& $\frac{80}{11}$& $\frac{192}{11}$& $\frac{12}{11}$& $\frac{80}{11}$\\
&&&&&&&&&&&&&&&&\\
$2015$&0&0& $\frac{2}{11}$& $\frac{1}{22}$&0&0&0& $\frac{1}{22}$& $\frac{7}{11}$&5& $\frac{92}{11}$& $\frac{29}{22}$&0& $\frac{-28}{11}$& $\frac{43}{22}$& $\frac{28}{11}$\\
&&&&&&&&&&&&&&&&\\
$2033$&0&0& $\frac{2}{11}$& $\frac{1}{11}$&0&0&0& $\frac{1}{11}$& $\frac{17}{22}$&4& $\frac{92}{11}$& $\frac{25}{22}$&0& $\frac{16}{11}$& $\frac{21}{11}$& $\frac{34}{11}$\\
&&&&&&&&&&&&&&&&\\
$2051$&0&0& $\frac{2}{11}$& $\frac{2}{11}$&0&0&0& $\frac{2}{11}$& $\frac{6}{11}$&4& $\frac{48}{11}$& $\frac{14}{11}$&0& $\frac{16}{11}$& $\frac{20}{11}$& $\frac{24}{11}$\\
&&&&&&&&&&&&&&&&\\

&&&&&&&&&&&&&&&&\\
\hline
\end{tabular}
\end{center}

\newpage
\begin{center}
\textbf{Table 4.} (Theorem 2.3 (ii))(contd.)
\begin{tabular}{|c|c|c|c|c|c|c|c|c|c|c|c|c|c|c|c|c|}
\hline
&&&&&&&&&&&&&&&&\\
$ijkl$&$d_1$ &$d_2$& $d_3$& $d_4$&$d_5$ &$d_6$& $d_7$& $d_8$&$d_9$ &$d_{10}$& $d_{11}$& $d_{12}$&$d_{13}$ &$d_{14}$& $d_{15}$& $d_{16}$ \\
\hline 
&&&&&&&&&&&&&&&&\\
$2114$&0&0& $\frac{2}{11}$& $\frac{1}{11}$&0&0&0&0& $\frac{3}{11}$&5& $\frac{136}{11}$& $\frac{18}{11}$&3& $\frac{16}{11}$&2&4\\
&&&&&&&&&&&&&&&&\\
$2132$&0&0& $\frac{2}{11}$& $\frac{2}{11}$&0&0&0&0& $\frac{6}{11}$&4& $\frac{136}{11}$& $\frac{14}{11}$&2& $\frac{16}{11}$&2&4\\
&&&&&&&&&&&&&&&&\\
$2213$&0&0& $\frac{2}{11}$& $\frac{2}{11}$&0&0&0&0& $\frac{1}{22}$&4& $\frac{180}{11}$& $\frac{39}{22}$&6& $\frac{104}{11}$&2&6\\
&&&&&&&&&&&&&&&&\\
$2231$&0&0& $\frac{2}{11}$& $\frac{4}{11}$&0&0&0&0& $\frac{1}{11}$&4& $\frac{136}{11}$& $\frac{17}{11}$&4& $\frac{16}{11}$&2&4\\
&&&&&&&&&&&&&&&&\\
$2312$&0&0& $\frac{2}{11}$& $\frac{4}{11}$&0&0&0&0& $\frac{1}{11}$&2& $\frac{224}{11}$& $\frac{17}{11}$&8& $\frac{192}{11}$&2&8\\
&&&&&&&&&&&&&&&&\\
$2411$&0&0& $\frac{2}{11}$& $\frac{8}{11}$&0&0&0&0& $\frac{2}{11}$&0& $\frac{224}{11}$& $\frac{12}{11}$&8& $\frac{192}{11}$&2&8\\
&&&&&&&&&&&&&&&&\\
$3005$&$\frac{-1}{44}$& $\frac{1}{44}$& $\frac{2}{11}$& $\frac{1}{11}$& $\frac{4}{11}$& $\frac{-64}{11}$& $\frac{1}{44}$& $\frac{1}{22}$& $\frac{53}{88}$& $\frac{201}{22}$& $\frac{252}{11}$& $\frac{205}{88}$& $\frac{115}{22}$& $\frac{-124}{11}$& $\frac{129}{44}$& $\frac{115}{22}$\\
&&&&&&&&&&&&&&&&\\
$3023$&0&0& $\frac{2}{11}$& $\frac{2}{11}$& $\frac{8}{11}$& $\frac{-128}{11}$&0& $\frac{1}{11}$& $\frac{35}{44}$& $\frac{68}{11}$& $\frac{236}{11}$& $\frac{89}{44}$& $\frac{67}{11}$&0& $\frac{32}{11}$& $\frac{67}{11}$\\
&&&&&&&&&&&&&&&&\\
$3041$&0&0& $\frac{2}{11}$& $\frac{4}{11}$& $\frac{16}{11}$& $\frac{-256}{11}$&0& $\frac{2}{11}$& $\frac{13}{22}$& $\frac{70}{11}$& $\frac{160}{11}$& $\frac{45}{22}$& $\frac{46}{11}$& $\frac{-16}{11}$& $\frac{31}{11}$& $\frac{46}{11}$\\
&&&&&&&&&&&&&&&&\\
$3104$&$\frac{-1}{22}$& $\frac{1}{22}$& $\frac{2}{11}$& $\frac{2}{11}$&0&0&0&0& $\frac{9}{44}$& $\frac{100}{11}$& $\frac{312}{11}$& $\frac{117}{44}$& $\frac{92}{11}$& $\frac{16}{11}$&3&8\\
&&&&&&&&&&&&&&&&\\
$3122$&0&0& $\frac{2}{11}$& $\frac{4}{11}$&0&0&0&0& $\frac{13}{22}$&6& $\frac{312}{11}$& $\frac{45}{22}$&8& $\frac{16}{11}$&3&8\\
&&&&&&&&&&&&&&&&\\
$3203$&$\frac{-1}{22}$& $\frac{1}{22}$& $\frac{2}{11}$& $\frac{4}{11}$&0&0& $\frac{-1}{22}$&0&0& $\frac{67}{11}$& $\frac{356}{11}$& $\frac{59}{22}$& $\frac{136}{11}$& $\frac{280}{11}$& $\frac{67}{22}$& $\frac{128}{11}$\\
&&&&&&&&&&&&&&&&\\
$3221$&0&0& $\frac{2}{11}$& $\frac{8}{11}$&0&0&0&0& $\frac{2}{11}$&6& $\frac{312}{11}$& $\frac{23}{11}$&8& $\frac{16}{11}$&3& 8\\
&&&&&&&&&&&&&&&&\\
$3302$&0&0& $\frac{2}{11}$& $\frac{8}{11}$&0&0& $\frac{-1}{11}$&0& $\frac{2}{11}$&0& $\frac{400}{11}$& $\frac{23}{11}$&16& $\frac{544}{11}$& $\frac{34}{11}$& $\frac{168}{11}$\\
&&&&&&&&&&&&&&&&\\
$3401$&$\frac{1}{11}$& $\frac{-1}{11}$& $\frac{2}{11}$& $\frac{16}{11}$&0&0& $\frac{-1}{11}$&0& $\frac{6}{11}$& $\frac{-68}{11}$& $\frac{400}{11}$& $\frac{10}{11}$& $\frac{168}{11}$& $\frac{544}{11}$& $\frac{34}{11}$& $\frac{168}{11}$\\
&&&&&&&&&&&&&&&&\\
$4013$&0&0& $\frac{2}{11}$& $\frac{4}{11}$& $\frac{16}{11}$& $\frac{-256}{11}$&0&0& $\frac{1}{11}$& $\frac{92}{11}$& $\frac{468}{11}$& $\frac{39}{11}$& $\frac{200}{11}$& $\frac{72}{11}$&4&12\\
&&&&&&&&&&&&&&&&\\
$4031$&0&0& $\frac{2}{11}$& $\frac{8}{11}$& $\frac{32}{11}$& $\frac{-512}{11}$&0&0& $\frac{2}{11}$& $\frac{96}{11}$& $\frac{360}{11}$& $\frac{34}{11}$& $\frac{136}{11}$& $\frac{-48}{11}$&4&8\\
&&&&&&&&&&&&&&&&\\
$4112$&0&0& $\frac{2}{11}$& $\frac{8}{11}$&0&0&0&0& $\frac{2}{11}$&6& $\frac{576}{11}$& $\frac{34}{11}$&20& $\frac{192}{11}$&4&16\\
&&&&&&&&&&&&&&&&\\
$4211$&0&0& $\frac{2}{11}$& $\frac{16}{11}$&0&0&0&0& $\frac{4}{11}$&4& $\frac{576}{11}$& $\frac{24}{11}$&16& $\frac{192}{11}$&4&16\\
&&&&&&&&&&&&&&&&\\
$5003$&$\frac{-1}{22}$& $\frac{1}{22}$& $\frac{2}{11}$& $\frac{8}{11}$& $\frac{16}{11}$& $\frac{-256}{11}$& $\frac{-1}{22}$& $\frac{-2}{11}$& $\frac{-31}{22}$& $\frac{115}{11}$& $\frac{820}{11}$& $\frac{63}{11}$& $\frac{402}{11}$& $\frac{424}{11}$& $\frac{115}{22}$& $\frac{258}{11}$\\
&&&&&&&&&&&&&&&&\\
$5021$&0&0& $\frac{2}{11}$& $\frac{16}{11}$& $\frac{32}{11}$& $\frac{-512}{11}$&0& $\frac{-4}{11}$& $\frac{-7}{11}$& $\frac{118}{11}$& $\frac{712}{11}$& $\frac{46}{11}$& $\frac{268}{11}$& $\frac{-48}{11}$& $\frac{9}{11}$& $\frac{172}{11}$\\
&&&&&&&&&&&&&&&&\\
$5102$&0&0& $\frac{2}{11}$& $\frac{16}{11}$&0&0& $\frac{-1}{11}$&0& $\frac{-7}{11}$&0& $\frac{928}{11}$& $\frac{46}{11}$&40& $\frac{896}{11}$& $\frac{56}{11}$& $\frac{344}{11}$\\
&&&&&&&&&&&&&&&&\\
\hline
\end{tabular}
\end{center}

\newpage

\section*{Acknowledgements}
We have used the open-source mathematics software SAGE (www.sagemath.org) to perform our calculations. Part of the work was done when the last named author visited the School of Mathematical Sciences, NISER, Bhubaneswar. He thanks the school for their hospitality and support. 



\begin{thebibliography}{10}
\bibitem{aw2} 
A. Alaca, S. Alaca and K. S. Williams, {\em Evaluation of the convolution sums 
$\sum_{l+18m=n}\sigma(l)\sigma(m)$ and $\sum_{2l+9m=n}\sigma(l)\sigma(m)$}, Int. Math. Forum {\bf 2} (2007), no. 1-4, 45--68.
\bibitem{aw4}
A. Alaca, S. Alaca and K. S. Williams, {\em Evaluation of the convolution sums 
$\sum_{l+24m=n}\sigma(l)\sigma(m)$ and $\sum_{3l+8m=n}\sigma(l)\sigma(m)$}, Math. J. Okayama Univ. {\bf 49} (2007), 93--111. 
\bibitem{a-a-w}
A. Alaca, S. Alaca and K. S. Williams, {\em Seven octonary quadratic forms}, Acta Arithmetica {\bf 135} (2008), 339--350. 
\bibitem{a-a-w1}
A. Alaca, S. Alaca and K. S. Williams, {\em Fourteen octonary quadratic forms}, Int. J. Number Theory {\bf 6} (2010), 37--50.
\bibitem{aw3} 
S. Alaca and K. S. Williams, {\em Evaluation of the convolution sums 
$\sum_{l+6m=n}\sigma(l)\sigma(m)$ and $\sum_{2l+3m=n}\sigma(l)\sigma(m)$}, J. Number Theory {\bf 124} (2007), no. 2, 491--510.
\bibitem{Andrews-Berndt}
G. E. Andrews and B. C. Berndt, Ramanujan's Lost Notebook, Part I, {\em Springer}, New York, 2005.
\bibitem{Berndt}
B. C. Berndt, Ramanujan's  Notebooks, Part III, Springer, New York, 1991.
\bibitem{Chan-Cooper}
H. H. Chan and S. Cooper, Powers of theta functions, Pacific J. Math. {\bf 235} (2008) 1--14.
\bibitem{Chan-ong}
H. H. Chan and Y. L. Ong, On Eisenstein series and $\sum_{m,n=-\infty}^{\infty}q^{m^2+mn+2n^2 }$, 
Proc. Amer. Math. Soc. {\bf 127} (1999) 1735--1744.
\bibitem{Cooper-Ye}
S. Cooper and D. Ye, Level 14 and 15 analogues of Ramanujan's elliptic functions to alternative bases, to appear in Trans. Amer. Math. Soc.; http://dx.doi.org/10.1090/tran6658.
\bibitem{huard}
J. G. Huard, Z. M. Ou, B. K. Spearman and K. S. Williams, Elementary evaluation of certain convolution sums involving divisor functions, in Number Theory for the Millennium, II (A. K. Peters, Natick, MA, 2002), pp. 229--274.
\bibitem{koblitz}
N. Koblitz, Introduction to elliptic curves and modular forms, {\sl Second Edition}, Graduate Texts in Mathematics {\bf 97}, {\em Springer}, 1993.
\bibitem{lomadze}
G. A. Lomadze, Representation of numbers by sums of the quadratic forms $x_1^2 + x_1 x_2 + x_2^2$ , Acta Arith. {\bf 54} (1989) 9--36.
\bibitem{miyake}
T. Miyake, Modular forms, {\em Springer-Verlag}, Berlin, 1989.
\bibitem{ntienjem}
E. Ntienjem, {\em Evaluation of the convolution sum involving the sum of divisors function for $22$, $44$ and $52$},  
Open Math. {\bf 15} (2017), 446--458. 
\bibitem{okayama}
B. Ramakrishnan and Brundaban Sahu, {\em Evaluation of convolution sums and some remarks on cusp forms of weight $4$ and level $12$},
Math. J. Okayama  Univ. {\bf 59} (2017), 71--79. 
\bibitem{octonary-ijnt}
B. Ramakrishnan, Brundaban Sahu and Anup Kumar Singh, {\em On the Number of Representations by Certain Octonary Quadratic Forms With 
Coefficients $1,2,3,4$ and $6$}, Int. J. Number Theory (to appear). arXiv:1607.03809v3  
\bibitem{royer}
E. Royer, {\em Evaluating convolution sums of the divisor function by quasimodular forms},  Int. J. Number Theory  {\bf 3}  (2007), no. 2, 231--261. 
\bibitem{Schoeneberg}
B. Schoeneberg, Elliptic Modular Functions: An Introduction, Die Grundlehren der Mathematischen Wissenschaften, Vol. 203, Springer, New York, 1974.
\bibitem{shimura}
G. Shimura, {\em On modular forms of half-integral weight}, Ann. Math. {\bf 97} (1973), 440--481.
\bibitem{Stein}
W. A. Stein, Modular Forms: A Computational Approach, {\em American Mathematical Society}, Providence, RI, 2007.
\bibitem{Williams}
K. S. Williams, On a double series of Chan and Ong, Georgian Math. J. {\bf 13} (2006) 793--805.
\bibitem{Ye}
Dongxi Ye, On the quaternary form $x^2 + xy + 7y^2 + z^2 + zt + 7t^2$, Int. J. Number Theory {\bf 12} (2016), no. 7, 1791--1800. 

\end{thebibliography}
\end{document}